\DeclareFontFamily{OT1}{rsfs}{}
\DeclareFontShape{OT1}{rsfs}{n}{it}{<-> rsfs10}{}
\DeclareMathAlphabet{\mathscr}{OT1}{rsfs}{n}{it}
\newcommand{\Z}{{\mathbb Z}}
\newcommand{\Q}{{\mathbb Q}}
\newcommand{\R}{{\mathbb R}}
\newcommand{\Br}{\mathrm{Br}}
\newcommand{\cL}{\mathscr{L}}
\newcommand{\md}{{\rm mod}\ }
\newcommand{\fX}{\mathfrak{X}}
\newcommand{\Ga}{\mathrm{Gal}}
\newtheorem{thm}{Theorem}[section]
\newtheorem{lemma}[thm]{Lemma}
\newtheorem{prop}[thm]{Proposition}
\newtheorem{cor}[thm]{Corollary}
\begin{document}
\title[Division algebras with the same maximal subfields]{On division algebras having the same maximal subfields}

\begin{abstract}
We show that if a field $K$ of characteristic $\neq 2$ satisfies the
following property (*) {\it for any two central quaternion division
algebras $D_1$ and $D_2$ over $K,$ the fact that $D_1$ and $D_2$
have the same maximal subfields implies that $D_1 \simeq D_2$ over
$K,$} then the field of rational functions $K(x)$ also satisfies
(*). This, in particular, provides an alternative proof for the
result of S.~Garibaldi and D.~Saltman that the fields of rational
functions $k(x_1, \ldots , x_r),$ where $k$ is a number field,
satisfy (*). We also show that $K = k(x_1, \ldots , x_r),$ where $k$
is either a totally complex number field with a single diadic place
(e.g. $k = \Q(\sqrt{-1})$) or a finite field of characteristic $\neq
2,$ satisfies the analog of (*) for all central division algebras
having exponent two in the Brauer group $\Br(K).$
\end{abstract}

\author[A.S.~Rapinchuk]{Andrei S. Rapinchuk$^\flat$}

\thanks{$^\flat$ Partially supported by NSF grant DMS-0502120  and the Humboldt Foundation.}

\address{Department of Mathematics, University of Virginia,
Charlottesville, VA 22904}

\email{asr3x@virginia.edu}

\author[I.A.~Rapinchuk]{Igor A. Rapinchuk}

\address{Department of Mathematics, Yale University, New Haven, CT 06502}

\email{igor.rapinchuk@yale.edu}

\maketitle

\section{Introduction}\label{S:I}

Given two (finite dimensional) central division algebras $D_1$ and
$D_2$ over the {\it same} field $K,$ we say that $D_1$ and $D_2$
have the {\it same maximal subfields} if any maximal subfield $F$ of
$D_1$ admits a $K$-embedding $F \hookrightarrow D_2,$ and vice
versa. In \cite{PR}, 5.4, a question was raised regarding fields $K$
having the following property:

\vskip2mm

(*) \parbox[t]{14cm}{\it For any two central {\rm quaternion}
division algebras $D_1$ and $D_2$ over $K,$ the fact that $D_1$ and
$D_2$ have the same maximal subfields implies that $D_1 \simeq D_2$
over $K.$}

\vskip2mm

\noindent This question was motivated by the analysis of the general
problem of when weak commensurability of Zariski-dense subgroups of
absolutely almost simple algebraic groups implies their
commensurability, which in turn is related to the well-known open
question as to whether or not two isospectral Riemann surfaces are
necessarily commensurable. The fact that number fields have (*) --
which follows from the Albert-Hasse-Brauer-Noether Theorem (AHBN)
(cf. the proof of Corollary \ref{C:Fin-1}) and is also a consequence
of the Minkowski-Hasse Theorem on quadratic forms --  was used in
\cite{PR} (cf. also \cite{Re}) to show that if one of the two
Zariski-dense subgroups in absolutely almost simple groups of type
$A_1$ is arithmetic and the two subgroups are weakly commensurable,
then they are actually commensurable (in particular, the other
subgroup is also arithmetic), which implies that if one of the two
isospectral Riemann surfaces is arithmetically defined then the
surfaces are commensurable. To extend this result to more general
Zariski-dense subgroups (first and foremost, to non-arithmetic
lattices in $\mathrm{SL}_2(\R)$) using the techniques developed in
\cite{PR}, one needs to know what other fields have (*). It was
observed by Rost, Wadsworth and others that it is possible to
construct ``large" (in particular, infinitely generated) fields
which do not have (*) (cf. \cite{GaSa}, Example 2.1). On the other
hand, no such examples are known for finitely generated fields (and
the fields that arise in the analysis of weakly commensurable
finitely generated Zariski-dense subgroups are finitely generated),
and the question as to what finitely generated fields have (*)
remains wide open. In fact, until recently no fields other than
global fields were known to have (*). In \cite{GaSa}, Garibaldi and
Saltman answered in the affirmative one of the questions posed in
earlier versions of \cite{PR} by showing that a purely
transcendental extension $K = k(x_1, \ldots , x_r)$ of a number
field $k$ has (*) (more generally, it was shown in \cite{GaSa} that
any {\it transparent} field of characteristic $\neq 2$ has (*)).

The goal of this note is to present two further results on (*) and
related issues. All fields below will be of characteristic $\neq 2.$
First, we show that the property to have (*) is stable under purely
transcendental extensions, which gives an alternative proof of the
fact that $K = k(x_1, \ldots , x_r),$ with $k$ a number field, has
(*).

\vskip2mm

\noindent {\bf Theorem A.} {\it Let $K$ be a field of characteristic
$\neq 2.$ If {\rm (*)} holds for $K$ then it also holds for the
field of rational functions $K(x).$}

\vskip2mm

Second, we give examples of fields for which a property similar to
(*) holds not only for quaternion, but for more general algebras
(so, the claim made in the end of \S 2 in \cite{GaSa} that (*)
cannot possibly hold for algebras of degree $> 2$ is not quite
accurate). We notice that given any central division algebra $D$
over $K,$ the opposite algebra $D^{\mathrm{op}}$ has the same
maximal subfields as $D$ (cf. Lemma \ref{L:B-301} for a more general
statement), so (the natural analog of) (*) definitely fails unless
$D \simeq D^{\mathrm{op}},$ i.e. the class $[D]$ has exponent two in
the Brauer group $\Br(K).$ On the other hand, there are division
algebras of exponent two that are more general than quaternion
algebras, and whether or not (*) holds for them is a meaningful
question (as the following theorem demonstrates).

\vskip2mm

\noindent {\bf Theorem B.} {\it Let $K = k(x_1, \ldots , x_r)$ be a
purely transcendental extension of a field $k$ which is either a
totally imaginary number field with a single diadic place (e.g. $k =
\Q(\sqrt{-1})$), or  is a finite field of characteristic $> 2,$ and
let $D_1$ and $D_2$ be two finite dimensional central division
algebras over $K$ such that the classes $[D_1] , [D_2] \in \Br(K)$
have exponent two. If $D_1$ and $D_2$ have the same maximal
subfields then $D_1 \simeq D_2.$}

\vskip2mm

One can view Theorem B as giving some indication that an analog of
(*) may hold over some fields more general than number fields for
some other absolutely almost simple algebraic groups associated with
algebras of exponent two. More precisely, following \cite{PR}, 5.4,
we call two $K$-forms $G_1$ and $G_2$ of an absolutely almost simple
algebraic group $G$ {\it weakly $K$-isomorphic} if they have the
same isomorphism classes of maximal $K$-tori, and the question is in
what situations weakly $K$-isomorphic groups are necessarily
$K$-isomorphic. Based on Theorem B, one can hope that the
affirmative answer is possible in certain cases where $G$ is of type
$B_n,$ $C_n$ and $G_2$ (and maybe $E_7$ and $F_4$) - see the end of
\S \ref{S:B} for a more detailed discussion, however none of these
types have been investigated so far.

The proofs of Theorems A (\S \ref{S:A}) and B (\S \ref{S:B}), just
as the argument in \cite{GaSa}, are based on an analysis of
ramification.

\vskip2mm

\noindent {\bf Notations and conventions.} All fields in this note
will be of characteristic $\neq 2.$ For a central simple algebra $A$
over a field $K,$  $[A]$ will denote the corresponding class in the
Brauer group $\Br(K).$ For $a , b \in K^{\times},$ we let
$\displaystyle \left(\frac{a , b}{K} \right)$ denote the
corresponding quaternion algebra. Given a valuation $v$ of a field
$K$ (and all valuations in this note will be discrete), we let
$\mathcal{O}_{K , v},$ $K_v$ and $\bar{K}^{(v)}$ denote the
corresponding valuation ring, the completion and the residue field,
respectively.

\vskip5mm

\section{Preliminaries}\label{S:P}

Let $K$ be a field endowed with a discrete valuation $v.$ For a
finite dimensional $K$-algebra $A,$ we set $A_v = A \otimes_K K_v$
endowed with the topology of a vector space over $K_v.$ We recall
that an \'etale $K$-algebra is defined to be a finite direct product
of finite separable extensions of $K.$ Then the notion for two
simple algebras to have the same maximal \'etale subalgebras is
defined in the obvious way (clearly, algebras with the same maximal
\'etale subalgebras have the same dimension).
\begin{lemma}\label{L:P-1}
{\rm (\cite{GaSa}, Lemma 3.1)} Let $A_1$ and $A_2$ be two central
simple algebras over $K,$ and let $v$ be a discrete valuation of
$K.$ If $A_1$ and $A_2$ have the same maximal \'etale subalgebras
then the algebras ${A_1}_{v}$ and ${A_2}_v$ also have the same
maximal \'etale subalgebras.
\end{lemma}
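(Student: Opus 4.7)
The plan is, by symmetry, to establish the following: every maximal \'etale $K_v$-subalgebra $E_v$ of $A_{1,v}$ admits a $K_v$-embedding into $A_{2,v}$. Set $n = \deg A_1 = \deg A_2$, so that $\dim_{K_v} E_v = n$. By the primitive element theorem for \'etale algebras, we may write $E_v = K_v[\alpha]$ for some $\alpha \in A_{1,v}$ whose minimal polynomial $f \in K_v[X]$ is separable of degree $n$ and coincides with the reduced characteristic polynomial of $\alpha$ in $A_{1,v}$.

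The heart of the argument is to realize $E_v$ as the $K_v$-completion of a maximal \'etale $K$-subalgebra of $A_1$. Exploiting density of $A_1$ in $A_{1,v}$ in the $K_v$-topology, I would choose $\alpha' \in A_1$ very close to $\alpha$. The coefficients of the reduced characteristic polynomial $g \in K[X]$ of $\alpha'$ are continuous functions of $\alpha'$ (computed from its matrix entries over a splitting field), so $g$ can be made arbitrarily close to $f$ in $K_v[X]$. For $\alpha'$ sufficiently close to $\alpha$, the discriminant of $g$ is nonzero, hence $g$ is separable; this forces $\alpha'$ to be regular semisimple, so that $K[\alpha'] \simeq K[X]/(g)$ is a maximal \'etale $K$-subalgebra of $A_1$ of dimension $n$.

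To compare completions, factor $f = \prod_i f_i$ into distinct monic irreducibles in $K_v[X]$. By Hensel's lemma together with Krasner's lemma, if $g$ is sufficiently close to $f$ then $g$ factors in $K_v[X]$ as $g = \prod_i g_i$ with each $g_i$ close to $f_i$, irreducible, and satisfying $K_v[X]/(g_i) \simeq K_v[X]/(f_i)$. Therefore
\[
K[\alpha'] \otimes_K K_v \;\simeq\; K_v[X]/(g) \;\simeq\; \prod_i K_v[X]/(g_i) \;\simeq\; \prod_i K_v[X]/(f_i) \;\simeq\; E_v.
\]

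Finally, the hypothesis on $A_1$ and $A_2$ supplies a $K$-embedding $K[\alpha'] \hookrightarrow A_2$ as a maximal \'etale $K$-subalgebra; tensoring with $K_v$ yields the desired $K_v$-embedding $E_v \simeq K[\alpha'] \otimes_K K_v \hookrightarrow A_{2,v}$. The main obstacle is the combined approximation/Krasner step: one has to control both that $\alpha'$ generates a full maximal \'etale $K$-subalgebra of $A_1$ (so that the hypothesis applies) \emph{and} that its $K_v$-completion is exactly $E_v$ (so that the conclusion is the intended one); once that calibration is in place, the rest is formal.
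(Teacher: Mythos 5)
Your argument is correct, and its overall strategy --- use the density of $A_1$ in ${A_1}_v$ to replace a generator of the given maximal \'etale subalgebra $E_v \subset {A_1}_v$ by a nearby element of $A_1$ generating a maximal \'etale $K$-subalgebra whose completion is isomorphic to $E_v$, then apply the hypothesis over $K$ and tensor back up to $K_v$ --- is exactly the paper's. What differs is the key technical lemma used to identify the completion. You identify $K[\alpha'] \otimes_K K_v$ with $E_v$ by tracking the reduced characteristic polynomial: continuity of its coefficients, nonvanishing of the discriminant to get that $K[\alpha']$ is maximal \'etale, and Krasner's lemma (continuity of roots) to match the irreducible factors $g_i$ of $g$ with those of $f$ and conclude $K_v[X]/(g_i) \simeq K_v[X]/(f_i)$. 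The paper instead works with the orbit map $\varphi \colon (g,t) \mapsto gtg^{-1}$ on $G_1 \times T_1^{\mathrm{reg}}$, where $G_1 = \mathrm{GL}_{1,A_1}$ and $T_1$ is the torus attached to $E_1$: surjectivity of the differential plus the implicit function theorem make $\varphi_v$ open on $K_v$-points, so a dense choice of $a \in A_1$ lands in its image and $K[a] \otimes_K K_v$ is literally a conjugate $gE_1g^{-1}$ of $E_1$ (slightly stronger than an abstract isomorphism, though only the isomorphism is needed). The authors themselves note the interchangeability of the two routes: in the proof of Proposition \ref{P:P-1} they write that ``the argument given in the proof of Lemma \ref{L:P-1} (or the standard Krasner's Lemma)'' yields the needed subfield. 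Your version is more elementary and self-contained at the level of algebras; the paper's version has the advantage of generalizing verbatim to maximal tori of reductive groups (cf.\ Remark 2.2), which is the broader setting the authors ultimately care about.
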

\begin{proof}
We give an argument based on a construction described in \cite{PR1},
proof of Theorem 3(ii) or in \cite{PR}, Lemma 3.4. Let $E_1$ be a
maximal \'etale subalgebra of ${A_1}_v.$ Let $G_1 = \mathrm{GL}_{1 ,
A_1}$ be the algebraic $K$-group associated to $A_1,$ and let $T_1 =
\mathrm{R}_{E_1/K_v}(\mathrm{GL}_1)$ be the maximal $K_v$-torus of
$G_1$ corresponding to $E_1.$ Consider the Zariski-open subset
$T_1^{\mathrm{reg}} \subset T_1$ of regular elements and the regular
map
$$
\varphi \colon G_1 \times T_1^{\mathrm{reg}} \longrightarrow G, \ \
(g , t) \mapsto gtg^{-1}.
$$
It is easy to check that the differential $d_{(g,t)}\varphi$ is
surjective for any $(g , t) \in G_1 \times T_1^{\mathrm{reg}},$ so
it follows from the Implicit Function Theorem that the map
$$
\varphi_v \colon G_1(K_v) \times T_1^{\mathrm{reg}}(K_v)
\longrightarrow G_1(K_v), \ \ (g , t) \mapsto gtg^{-1},
$$
is open in the topology defined by $v.$ In particular, $U_1 =
\mathrm{Im}\: \varphi_v$ is open in $G_1(K_v) = {A_1}_{v}^{\times}.$
On the other hand, by weak approximation for $K,$ we have that $A_1$
is dense in ${A_1}_v.$ So, pick $a \in A_1 \cap U_1$ and set $E_1^0
= K[a].$ Then $E_1^0$ is a maximal \'etale subalgebra of $A_1$ and
there exists $g \in {A_1}_v^{\times}$ such that $E_1^0 \otimes_K K_v
= gE_1g^{-1}.$ By our assumption, there exists an embedding $\iota^0
\colon E_1^0 \hookrightarrow A_2.$ Then
$$
\iota = (\iota^0 \otimes \mathrm{id}_{K_v}) \circ \mathrm{Int}\: g
$$
is a required embedding of $E_1$ into ${A_2}_v.$ By symmetry,
${A_1}_v$ and ${A_2}_v$ have the same maximal \'etale subalgebras.
\end{proof}

\vskip2mm

\addtocounter{thm}{1}

\noindent {\bf Remark 2.2.} The above argument uses the property of
weak approximation for $G_1$ which may fail for some algebraic
groups. Nevertheless, the following analog of Lemma \ref{L:P-1} is
true in general: Let $G_1$ and $G_2$ be two reductive algebraic
groups over a field $K,$ and let $v$ be a discrete valuation of $K.$
If $G_1$ and $G_2$ have the same isomorphism classes of maximal tori
over $K$ then they have the same isomorphism classes of maximal tori
over $K_v.$ To prove this, one needs to invoke weak approximation in
the variety of maximal tori - cf. the proof of Theorem 1(ii) in
\cite{PR2}.

\vskip2mm

\begin{lemma}\label{L:P-2}
Let $A_i = M_{d_i}(D_i)$ for some $d_i \geqslant 1$ and some central
division algebra $D_i$ over $K,$ where $i = 1, 2.$ If  $A_1$ and
$A_2$ have the same maximal \'etale subalgebras then $d_1 = d_2,$
and $D_1$ and $D_2$ have the same maximal subfields that are
separable over $K.$
\end{lemma}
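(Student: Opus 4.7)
The plan is to extract two invariants from the family of maximal \'etale subalgebras of $A_i = M_{d_i}(D_i)$: first the integer $d_i$, then the isomorphism classes of separable maximal subfields of $D_i$. Both rest on a structural description of maximal \'etale subalgebras of $M_d(D)$, which I would establish first.

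For a central simple algebra $A = M_d(D)$ with $D$ central division of degree $e$ over $K$, I claim every maximal \'etale $K$-subalgebra of $A$ has the form $E = F_1 \times \cdots \times F_r$, where the primitive idempotents of $E$ yield orthogonal idempotents $\varepsilon_1, \ldots, \varepsilon_r \in A$ with $\sum_i \varepsilon_i = 1$, each $\varepsilon_i A \varepsilon_i$ is a central simple $K$-algebra in the Brauer class of $A$ of degree $d_i e$ for integers $d_i \geqslant 1$ satisfying $\sum_i d_i = d$, and each $F_i$ is a maximal separable subfield of $\varepsilon_i A \varepsilon_i$, so $[F_i : K] = d_i e$. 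This follows from the Peirce decomposition of $A$ along the $\varepsilon_i$ together with the fact that a maximal commutative subalgebra of a central simple algebra of degree $m$ has $K$-dimension $m$; it is the main technical ingredient of the argument.

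Using this description, the constraints $\sum_i d_i = d$ and $d_i \geqslant 1$ give $r \leqslant d$, and equality is attained by $E = F \times \cdots \times F$ for any separable maximal subfield $F$ of $D$ (which exists for every central simple algebra by classical theory). Thus the maximum number of simple factors across all maximal \'etale subalgebras of $A$ is exactly $d$. Applying this to both $A_1$ and $A_2$ yields $d_1 = d_2$, and comparing $K$-dimensions then gives $\deg D_1 = \deg D_2$.

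Finally, set $d = d_1 = d_2$ and $e = \deg D_1 = \deg D_2$, and let $F$ be any separable maximal subfield of $D_1$. The diagonal embedding realises $F^d := F \times \cdots \times F$ ($d$ factors, each of $K$-dimension $e$) as a maximal \'etale subalgebra of $A_1$, hence by hypothesis also of $A_2 = M_d(D_2)$. Applying the structural description inside $A_2$ to these $d$ factors forces $d_i = 1$ throughout, so each $\varepsilon_i A_2 \varepsilon_i$ is a central division algebra of degree $e$ in the Brauer class of $D_2$, hence isomorphic to $D_2$, and $F$ embeds as a maximal subfield of it. Exchanging the roles of $D_1$ and $D_2$ completes the proof.
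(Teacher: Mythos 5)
Your argument is correct and follows essentially the same route as the paper's: both transport the diagonal \'etale algebra $F\times\cdots\times F$ built from a maximal separable subfield $F$ of $D_1$ into $A_2$ and use its orthogonal idempotents to decompose $(D_2)^{d_2}$, forcing $d_1=d_2$ and realizing $F$ inside a corner $\varepsilon_iA_2\varepsilon_i\simeq \mathrm{End}_{D_2}(\varepsilon_iV_2)\simeq D_2$ (the paper does this directly for the one algebra $F\times\cdots\times F$ rather than via your general structural description, which is more than is needed). One small caveat: the auxiliary fact you invoke should be stated for maximal \emph{\'etale} (commutative separable) subalgebras, whose $K$-dimension equals the degree of the ambient central simple algebra; arbitrary maximal commutative subalgebras of $M_n(K)$ can have dimension larger than $n$, and a maximal separable \emph{subfield} of a split algebra over a separably closed field has degree $1$, so the precise statement matters even though your conclusions are unaffected.
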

\begin{proof}
Let $P_1$ be a maximal separable subfield of $D_1.$ Then $L_1 = P_1
\oplus \cdots \oplus P_1$ ($d_1$ times) is a maximal \'etale
subalgebra of $A_1.$ Let $e_1 = (1, 0, \ldots , 0),$ $\ldots ,$
$e_{d_1} = (0, \ldots , 0, 1)$ be the orthogonal idempotents in
$L_1.$ By our assumption, $L_1$ admits an embedding into $A_2,$ and
we identify the former with its image in the latter. Consider the
right $D_2$-vector space $V_2 = (D_2)^{d_2}$ as a left $A_2$-module.
Then the relations $e_i^2 = e_i$ and $e_ie_j = 0$ for $i \neq j$
imply that the sum of the nonzero $D_2$-subspaces $W_i = e_iV_2,$
where $i = 1, \ldots , d_1,$ is direct, hence $d_1 \leqslant d_2.$
By symmetry, $d_1 = d_2,$ and then $D_1$ and $D_2$ have the same
dimension. In the above notations, it follows that each $W_i$ is
1-dimensional over $D_2$ and $V_2 = \oplus W_i.$ Clearly, $P_1
\simeq L_1e_1$ embeds in $\mathrm{End}_{D_2}\: W_1 \simeq D_2,$ and
the required fact follows.
\end{proof}

\vskip2mm

\begin{cor}\label{C:P-1}
Let $A_1$ and $A_2$ be two central simple algebras over $K$ such
that $\dim_K A_1 = \dim_K A_2$ is relatively prime to
$\mathrm{char}\: K,$ and let $v$ be a discrete valuation of $K.$
Write ${A_i}_v = M_{d_i}(D_i)$ with $d_i \geqslant 1$ and $D_i$ a
central division $K_v$-algebra. If $A_1$ and $A_2$ have the same
maximal \'etale subalgebras then $d_1 = d_2$ and $D_1$ and $D_2$
have the same maximal subfields.
\end{cor}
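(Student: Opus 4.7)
My plan is to chain together the two previous lemmas and then clean up one small separability issue using the coprimality hypothesis on the dimension.

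First, I would apply Lemma \ref{L:P-1} to $A_1$ and $A_2$: the assumption that they share the same maximal \'etale subalgebras over $K$ transfers verbatim to the completions, so ${A_1}_v$ and ${A_2}_v$ have the same maximal \'etale subalgebras over $K_v$. Then, feeding the Wedderburn decompositions ${A_i}_v = M_{d_i}(D_i)$ into Lemma \ref{L:P-2}, I obtain at once that $d_1 = d_2$ and that $D_1$ and $D_2$ share the same maximal subfields that are \emph{separable} over $K_v$.

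The only thing left is to drop the word ``separable''. Since $K \hookrightarrow K_v$, we have $\mathrm{char}\: K_v = \mathrm{char}\: K =: p$, and $\dim_{K_v} {A_i}_v = \dim_K A_i$ is coprime to $p$ by hypothesis. Writing $n_i$ for the degree of $D_i$ over its center $K_v$, the identity $\dim_{K_v} {A_i}_v = d_i^2\, n_i^2$ shows that $n_i$ is coprime to $p$ as well. Any maximal subfield $L \subset D_i$ then satisfies $[L : K_v] = n_i$ coprime to $p$, and a finite field extension whose degree is coprime to the characteristic is automatically separable. Hence for $D_1$ and $D_2$ the notions of ``maximal subfield'' and ``maximal separable subfield'' coincide, and Lemma \ref{L:P-2} gives the full conclusion.

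I do not foresee a real obstacle: the substantive content has already been packaged into Lemmas \ref{L:P-1} and \ref{L:P-2}, and the coprimality assumption exists precisely to make the separability bookkeeping painless via the standard fact that any finite extension of degree prime to $\mathrm{char}$ is separable.
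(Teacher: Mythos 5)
Your argument is correct and follows the paper's proof essentially verbatim: Lemma \ref{L:P-1} to pass to the completions, Lemma \ref{L:P-2} to get $d_1=d_2$ and the same maximal separable subfields, and then the observation that coprimality of the dimension to the characteristic forces every maximal subfield of $D_i$ to be separable. The only difference is that you spell out the degree bookkeeping ($\dim_{K_v}{A_i}_v = d_i^2 n_i^2$, so $[L:K_v]=n_i$ is prime to the characteristic) which the paper leaves implicit.
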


Indeed, by Lemma \ref{L:P-1}, the $K_v$-algebras ${A_1}_v$ and
${A_2}_v$ have the same maximal \'etale subalgebras. So, by Lemma
\ref{L:P-2}, we have $d_1 = d_2$ and $D_1$ and $D_2$ have the same
maximal {\it separable} subfields. However, since $\dim_K A_1 =
\dim_K A_2$ is prime to $\mathrm{char}\: K,$ all maximal subfields
of $D_1$ and $D_2$ are separable over $K_v.$

\newpage

To formulate our next lemma, we need to introduce one condition on a
field $k:$

\vskip3mm

\noindent (LD)\ \  \parbox[t]{15cm}{Given finite separable
extensions $E \subset F$ of $k,$ any central division algebra
$\Delta$ over $E$ contains a maximal separable subfield $P$ that is
linearly disjoint from $F$ over $E.$}

\vskip3mm

\begin{lemma}\label{L:P-3}
Let $\mathcal{K}$ be a field complete with respect to a discrete
valuation $v,$ with  residue field $k = \bar{\mathcal{K}}^{(v)},$
and let $\mathcal{D}_1 , \mathcal{D}_2$ be two central division
algebras over $\mathcal{K}.$ Assume that the center $\mathcal{E}_i
:= Z(\bar{\mathcal{D}}_i)$ is a separable extension of $k$ for $i =
1 , 2.$ If $\mathcal{D}_1$ and $\mathcal{D}_2$ have the same maximal
subfields then

\vskip2mm

\noindent \ {\rm (i)} $[\mathcal{E}_1 : k] = [\mathcal{E}_2 : k];$

\vskip1mm

\noindent {\rm (ii)} \parbox[t]{14.5cm}{in each of the following
situations: (a) $\mathcal{D}_1$ and $\mathcal{D}_2$ are of prime
degree, (b) $k$ satisfies (LD), we have $\mathcal{E}_1 =
\mathcal{E}_2.$}
\end{lemma}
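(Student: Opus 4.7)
The plan is to exploit the residue structure of the $\mathcal{D}_i$ over the complete discretely valued field $\mathcal{K}$; I write $\bar{F}$ for the residue field of a finite extension $F/\mathcal{K}$. Since $\mathcal{E}_i/k$ is separable, Hensel's lemma produces an unramified subfield $\hat{\mathcal{E}}_i \subseteq \mathcal{D}_i$ of degree $f_i := [\mathcal{E}_i:k]$ over $\mathcal{K}$ with residue $\mathcal{E}_i$. The central technical constraint is this: for any maximal subfield $F \hookrightarrow \mathcal{D}_i$ with ramification index $e_F$, the fact that $F$ splits $\mathcal{D}_i$ together with functoriality of the residue map on $\Br(\mathcal{K})$ yields
\[
[\bar{F} \cdot \mathcal{E}_i : \bar{F}] \mid e_F,
\]
expressing that the ramification character of $[\mathcal{D}_i]$ is killed on $G_{\bar{F}}$ after multiplication by $e_F$.

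For part (i), I would establish $f_1 = f_2$ by constructing, in each $\mathcal{D}_i$, a distinguished maximal subfield $F_i \supseteq \hat{\mathcal{E}}_i$ obtained by extending $\hat{\mathcal{E}}_i$ via a maximal subfield of the centralizer $C_{\mathcal{D}_i}(\hat{\mathcal{E}}_i)$ (a central division algebra over $\hat{\mathcal{E}}_i$). The residue $\bar{F}_i$ then contains $\mathcal{E}_i$, and the ramification index $e_{F_i}$ divides $n/f_i$. Transporting $F_1$ into $\mathcal{D}_2$ via the ``same maximal subfields'' property and applying the divisibility constraint to the pair $(F_1, \mathcal{E}_2)$ gives a numerical relation between $[\mathcal{E}_2 : \mathcal{E}_1 \cap \mathcal{E}_2]$ and $e_{F_1}$; running the argument in both directions and choosing the $F_i$ to minimize overlap of $\bar{F}_i$ with $\mathcal{E}_j$ ($j\neq i$) produces $f_1 = f_2$.

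For part (ii)(a), the prime-degree hypothesis $n = p$ combined with (i) forces $f_1 = f_2 \in \{1,p\}$. If $f_i = 1$ then $\mathcal{E}_i = k$ trivially. When $f_i = p$, the fundamental equality $n^2 = e_i f_i m_i^2$ with $n = f_i = p$ forces $m_i = 1$ and $e_i = p$, so $\bar{\mathcal{D}}_i = \mathcal{E}_i$ is a field and $\hat{\mathcal{E}}_i$ is already a maximal subfield of $\mathcal{D}_i$. Transporting $\hat{\mathcal{E}}_1$ into $\mathcal{D}_2$ produces an unramified maximal subfield of $\mathcal{D}_2$ whose residue $\mathcal{E}_1$ must contain the center $\mathcal{E}_2$ of $\bar{\mathcal{D}}_2$; by degree $\mathcal{E}_1 = \mathcal{E}_2$.

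For part (ii)(b), I apply (LD) to the central division algebra $\bar{\mathcal{D}}_1$ over $\mathcal{E}_1$, with respect to the finite separable extension $\mathcal{E}_1 \subseteq \mathcal{E}_1\mathcal{E}_2$, to obtain a maximal separable subfield $P$ of $\bar{\mathcal{D}}_1$ that is linearly disjoint from $\mathcal{E}_1\mathcal{E}_2$ over $\mathcal{E}_1$. I lift $P$ to an unramified extension of $\hat{\mathcal{E}}_1$ inside $\mathcal{D}_1$ (using Hensel lifting within $C_{\mathcal{D}_1}(\hat{\mathcal{E}}_1)$) and extend to a maximal subfield $F \subseteq \mathcal{D}_1$; the linear disjointness of $P$ forces $\bar{F} \cap \mathcal{E}_2 \subseteq \mathcal{E}_1 \cap \mathcal{E}_2$. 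Transporting $F$ into $\mathcal{D}_2$ and combining the divisibility constraint with the equality $f_1 = f_2$ from (i) forces $\mathcal{E}_2 \subseteq \mathcal{E}_1$, and symmetry gives $\mathcal{E}_1 = \mathcal{E}_2$. The main technical obstacle lies in the detailed bookkeeping of residues and ramification indices in part (i), and in verifying that the constructed maximal subfields have the intended residue behavior; condition (LD) in part (ii)(b) is precisely the hypothesis engineered to make the required linear-disjointness selection available inside $\bar{\mathcal{D}}_1$.
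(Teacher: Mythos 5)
Your treatment of part (ii) is essentially the paper's argument: in case (a) you reduce to $\bar{\mathcal{D}}_i=\mathcal{E}_i$ and transport the unramified maximal subfield, and in case (b) you apply (LD) to $\bar{\mathcal{D}}_1$ over $\mathcal{E}_1$ with respect to $\mathcal{E}_1\mathcal{E}_2$, lift to an unramified maximal subfield, transport it, and use linear disjointness to force $\mathcal{E}_2\subset\mathcal{E}_1$. (Your ``central technical constraint'' $[\bar{F}\mathcal{E}_i:\bar{F}]\mid e_F$ is a correct consequence of the compatibility of restriction with the ramification map, though the paper gets the same conclusions by elementary degree counting inside $\bar{\mathcal{D}}_2$.)

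The genuine gap is in part (i), on which both halves of (ii) depend. Your plan is to compare $f_1$ and $f_2$ purely on the residue side, by building maximal subfields $F_i\supseteq\hat{\mathcal{E}}_i$ with $e_{F_i}\mid n/f_i$ and then ``choosing the $F_i$ to minimize overlap of $\bar{F}_i$ with $\mathcal{E}_j$.'' No mechanism is given for making that choice, and in general none is available: controlling the residue field of a maximal subfield of $\mathcal{D}_1$ relative to a prescribed extension of $k$ is exactly what hypothesis (LD) is engineered to provide, and (i) is asserted with no such hypothesis. Concretely, transporting $F_1$ into $\mathcal{D}_2$ only yields $[\mathcal{E}_2:\bar{F}_1\cap\mathcal{E}_2]\mid e_{F_1}\mid n/f_1$, which is vacuous whenever $\bar{F}_1\supseteq\mathcal{E}_2$, and you cannot rule that out. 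The paper avoids the residue side entirely for (i): since $\mathcal{E}_i/k$ is separable (inertially split case), one has the identity $[\mathcal{E}_i:k]=e(\tilde{v}_i\vert v)=[\tilde{v}_i(\mathcal{D}_i^{\times}):v(\mathcal{K}^{\times})]$, so it suffices to compare value groups. Choosing $a\in\mathcal{D}_1$ whose value generates $\tilde{v}_1(\mathcal{D}_1^{\times})$ and a maximal subfield $\mathcal{L}\ni a$, one transports $\mathcal{L}$ into $\mathcal{D}_2$; completeness of $\mathcal{K}$ forces $\tilde{v}_2\vert\mathcal{L}=\tilde{v}_1\vert\mathcal{L}$, whence $e(\tilde{v}_1\vert v)\leqslant e(\tilde{v}_2\vert v)$, and symmetry finishes. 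You should replace your argument for (i) by this value-group comparison; with (i) in hand, your (ii) goes through.
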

\begin{proof}
(i): Let $\tilde{v}_i$ be the extension of $v$ to $\mathcal{D}_i.$
It is known that $[\mathcal{E}_i : k]$ coincides with the
ramification index $e(\tilde{v}_i \vert v) =
[\tilde{v}_i(\mathcal{D}_i^{\times}) : v(\mathcal{K}^{\times})]$
(cf. \cite{Se},  ch. XII, \S 2, ex. 3, or \cite{W}, Thm. 3.4), so it
is enough to show that $e(\tilde{v}_1 \vert v) = e(\tilde{v}_2 \vert
v).$ Let $a \in \mathcal{D}_1$ be such that $\tilde{v}_1(a)$
generates the value group $\tilde{v}_1(\mathcal{D}_1^{\times}),$ and
let $\mathcal{L}$ be a maximal subfield of $\mathcal{D}_1$
containing $a.$ Then $e(\tilde{v}_1 \vert v) =
[\tilde{v}_1(\mathcal{L}^{\times}) : v(\mathcal{K}^{\times})].$ On
the other hand, by our assumption $\mathcal{L}$ can be embedded in
$\mathcal{D}_2,$ and if we identify the former with its image in the
latter, then $\tilde{v}_2 \vert \mathcal{L} = \tilde{v}_1 \vert
\mathcal{L}$ because $v$ has a unique extension to $\mathcal{L}$ as
$\mathcal{K}$ is complete (cf. \cite{Se}, ch. II, \S 2, cor. 2). It
follows that $e(\tilde{v}_1 \vert v) \leqslant e(\tilde{v}_2 \vert
v).$ By symmetry, $e(\tilde{v}_1 \vert v) = e(\tilde{v}_2 \vert v),$
as required.

\vskip2mm

(ii): First, suppose $\mathcal{D}_1$ and $\mathcal{D}_2$ have prime
degree $p.$ If $\mathcal{D}_1$ is unramified then $\mathcal{E}_1 =
k,$ so we obtain from (i) that $\mathcal{E}_2 = k,$ and there is
nothing to prove. If $\mathcal{D}_1$ is ramified then
$$
e(\tilde{v}_1 \vert v) = p = [\mathcal{E}_1 : k],
$$
so it follows from the formula in \cite{Se}, {\it loc. cit.}, that
$[\bar{\mathcal{D}}_1 : \mathcal{E}_1]$ divides $p,$ and therefore
in fact $\bar{\mathcal{D}}_1 = \mathcal{E}_1.$ Similarly,
$\bar{\mathcal{D}}_2 = \mathcal{E}_2.$ Pick $a$ in the valuation
ring $\mathcal{O}_{D_1 , \tilde{v}_1}$ so that for its residue
$\bar{a} \in \bar{\mathcal{D}}_1$ we have $\bar{\mathcal{D}}_1 =
k(\bar{a}).$ Then $\mathcal{L} := \mathcal{K}(a)$ is a maximal
unramified subfield of $\mathcal{D}_1,$ with residue field
$\bar{\mathcal{L}}^{(\tilde{v}_1)} = \mathcal{E}_1.$ As in the proof
of (i), $\mathcal{L}$ embeds into $\mathcal{D}_2$ and $\tilde{v}_1
\vert \mathcal{L} = \tilde{v}_2 \vert \mathcal{L}.$ So,
$[\bar{\mathcal{L}}^{(\tilde{v}_2)} : k] = p,$ hence
$\bar{\mathcal{L}}^{(\tilde{v}_2)} = \bar{\mathcal{D}}_2 =
\mathcal{E}_2.$ Finally,
$$
\mathcal{E}_1 = \bar{\mathcal{L}}^{(\tilde{v}_1)} =
\bar{\mathcal{L}}^{(\tilde{v}_2)} = \mathcal{E}_2,
$$
as required.

Now, assume that $k$ possesses property (LD), and let $n$ denote the
common degree of $\mathcal{D}_1$ and $\mathcal{D}_2.$ We will prove
that $\mathcal{E}_2 \subset \mathcal{E}_1;$ then  (i) will yield
$\mathcal{E}_1 = \mathcal{E}_2.$ Set $\mathcal{F} =
\mathcal{E}_1\mathcal{E}_2$ (in a fixed algebraic closure of $k$).
Since $\mathcal{E}_i$ is separable over $k,$ by (LD), there exists a
maximal separable subfield $\mathcal{P}$ of $\bar{\mathcal{D}}_1$
that is linearly disjoint from $\mathcal{F}$ over $\mathcal{E}_1.$
Write $\mathcal{P} = k(\bar{a}),$ and set $\mathcal{L} =
\mathcal{K}(a).$ Since $[\mathcal{P} : k] = n,$ we see that
$\mathcal{L}$ is a maximal subfield of $\mathcal{D}_1$ with residue
field $\bar{\mathcal{L}}^{(\tilde{v}_1)} = \mathcal{P}.$ As above,
$\mathcal{L}$ embeds into $\mathcal{D}_2,$ and therefore
$\mathcal{P} = \bar{\mathcal{L}}^{(\tilde{v}_1)} =
\bar{\mathcal{L}}^{(\tilde{v}_2)}$ embeds into
$\bar{\mathcal{D}}_2.$ It follows that $\mathcal{P} \supset
\mathcal{E}_2.$ (Indeed, otherwise $\mathcal{E}_2\mathcal{P}$ would
be a separable extension of $k$ contained in $\bar{D}_2$ and having
degree $> n.$ Writing $\mathcal{E}_2\mathcal{P} = k(\bar{b})$ for
some $b \in \mathcal{O}_{\mathcal{D}_2 , \tilde{v}_2},$ we would
find that $\mathcal{K}(b)$ would be an extension of $\mathcal{K}$
contained in $\mathcal{D}_2$ and of degree $> n,$ which is
impossible.) Since $\mathcal{P}$ was chosen to be linearly disjoint
from $\mathcal{F} = \mathcal{E}_1\mathcal{E}_2$ over
$\mathcal{E}_1,$ we conclude that $\mathcal{F} = \mathcal{E}_1,$
i.e. $\mathcal{E}_2 \subset \mathcal{E}_1.$
\end{proof}

\vskip2mm

\noindent {\bf Remark 2.6.} We would like to point out that the
assertion of Lemma \ref{L:P-3}(ii) that $\mathcal{E}_1 =
\mathcal{E}_2$ also holds if $\mathcal{D}_1$ and $\mathcal{D}_2$ are
of degree 4 and $k$ is of characteristic $\neq 2$ such that the
quaternion algebra $\displaystyle \left(\frac{-1 , -1}{k} \right)$
is not a division algebra (in particular, if $\sqrt{-1} \in k$).
Indeed, the argument in the cases where $[\mathcal{E}_1 : k] =
[\mathcal{E}_2 : k]$ equals 1 or 4 is identical to the one given in
Case (a) of Lemma \ref{L:P-3}(ii), so we only need to consider the
situation where $[\mathcal{E}_1 : k] = [\mathcal{E}_2 : k] = 2,$
hence $\bar{\mathcal{D}}_i$ is a central quaternion division algebra
over $\mathcal{E}_i$ for $i = 1, 2.$ To mimic the argument used in
Case (b), we need to show that $\bar{\mathcal{D}}_1$ contains a
maximal subfield $\mathcal{P}$ which is linearly disjoint from
$\mathcal{E}_1\mathcal{E}_2$ over $\mathcal{E}_1,$ and vice versa.
Since $[\mathcal{E}_1\mathcal{E}_2 : \mathcal{E}_1] \leqslant 2,$
this will obviously hold automatically if not all maximal subfields
of $\bar{\mathcal{D}}_1$ are $\mathcal{E}_1$-isomorphic. Now,
according to (\cite{Pi}, Ex. 4 in \S 13.6), if all maximal subfields
of a quaternion division algebra $D$ over a field $F$ of
characteristic $\neq 2$ are isomorphic then $F$ is formally real,
pythagorean and $\displaystyle D \simeq \left(\frac{-1 , -1}{F}
\right).$ However, our assumption implies that $\displaystyle
\left(\frac{-1 , -1}{\mathcal{E}_1} \right)$ is not a division
algebra, and the required fact follows.

\addtocounter{thm}{1}

\vskip2mm

We will now describe a class of fields having property (LD).
\begin{prop}\label{P:P-1}
Let $k$ be finitely generated over its prime subfield. Then $k$ has
property (LD).
\end{prop}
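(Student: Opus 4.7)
The plan is to combine a generic irreducibility argument for the reduced characteristic polynomial of a central simple algebra with Hilbert irreducibility for Galois extensions. First, I would reduce: replace $k$ by $E$ (which remains finitely generated over its prime subfield, since $E/k$ is finite), and replace $F$ by its Galois closure over $E$, since linear disjointness from a larger field implies linear disjointness from $F$; thus I may assume $E = k$ and $F/E$ is a finite Galois extension. A classical theorem states that any field finitely generated over its prime subfield is Hilbertian, so $E$ enjoys this property.

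Next, I introduce the generic element. Fix an $E$-basis $e_1, \ldots, e_{n^2}$ of $\Delta$, with $n = \deg \Delta$, let $T_1, \ldots, T_{n^2}$ be independent indeterminates, and set $a = \sum_i T_i e_i \in \Delta \otimes_E E(\mathbf{T})$, with reduced characteristic polynomial $g_{\mathbf{T}}(X) \in E(\mathbf{T})[X]$. The key step is to show that $g_{\mathbf{T}}(X)$ is irreducible over $F(\mathbf{T})$. This follows from a general fact: for any central simple algebra $A$ of degree $n$ over a field $L$, the reduced characteristic polynomial of the generic element is irreducible over $L(\mathbf{T})$. Upon extending scalars to $\bar{L}$, where $A \otimes_L \bar{L} \cong M_n(\bar{L})$, and passing to matrix-entry coordinates, this reduces to the classical fact that the generic monic polynomial of degree $n$ with algebraically independent coefficients is irreducible (with Galois group $S_n$) over the rational function field in those coefficients. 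Applied to the central simple $F$-algebra $\Delta \otimes_E F$ of degree $n$ (for which $\{e_i\}$ is still an $F$-basis), this yields the desired irreducibility over $F(\mathbf{T})$.

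Finally, I invoke Hilbert irreducibility for Galois extensions. Let $\alpha$ be a root of $g_{\mathbf{T}}$ in an algebraic closure of $E(\mathbf{T})$, and let $\mathcal{N}/E(\mathbf{T})$ be the Galois closure of $F(\mathbf{T})(\alpha)/E(\mathbf{T})$, with Galois group $G$. Set $H_F = \Ga(\mathcal{N}/F(\mathbf{T}))$ and $H_\alpha = \Ga(\mathcal{N}/E(\mathbf{T})(\alpha))$; the irreducibility of $g_{\mathbf{T}}$ over $F(\mathbf{T})$ translates into the subgroup relation $H_F \cdot H_\alpha = G$. Since $E$ is Hilbertian, there exist $\mathbf{t}^0 \in E^{n^2}$ such that the specialization $\mathcal{N}_{\mathbf{t}^0}/E$ is Galois with group $G$ and the Galois correspondence between subgroups and subfields is preserved. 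Because $F(\mathbf{T})$ contains no indeterminates, it specializes to $F$ itself, so the relation $H_F \cdot H_\alpha = G$ persists. This forces $[F(\alpha_0) : F] = n$ for $\alpha_0$ a root of the specialized polynomial $g_{\mathbf{t}^0}(X) \in E[X]$, meaning $g_{\mathbf{t}^0}$ is irreducible (and separable) over $F$. Setting $a_0 = \sum_i t_i^0 e_i \in \Delta$, the field $P := E(a_0) \simeq E[X]/(g_{\mathbf{t}^0})$ is a maximal separable subfield of $\Delta$ linearly disjoint from $F$ over $E$.

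The main obstacle is the careful bookkeeping for Hilbert irreducibility of Galois extensions, ensuring that the specialization preserves both the total Galois group $G$ of $\mathcal{N}/E(\mathbf{T})$ and the subgroup $H_F$ attached to the constant subextension $F(\mathbf{T})/E(\mathbf{T})$. The preservation of $H_F$ is essentially automatic because specialization acts as the identity on $F$, while the preservation of $G$ is precisely Hilbertianity of $E$ applied to the Galois extension $\mathcal{N}/E(\mathbf{T})$.
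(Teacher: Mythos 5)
Your argument is essentially correct but follows a genuinely different route from the paper. The paper proves (LD) by producing a discrete valuation $w$ of $E$ with locally compact completion such that $E_w = F_{\tilde w}$ (via an embedding $F \hookrightarrow \Q_p$ in characteristic zero, and via a Tchebotarev-plus-Hensel construction of an embedding $F \hookrightarrow \F_q((T))$ in positive characteristic), and then transports a degree-$n$ extension of $E_w$ into $\Delta$ by Krasner's Lemma/weak approximation; linear disjointness is read off from $[PF:F] \geqslant [\mathcal{P}F_{\tilde w} : F_{\tilde w}] = n.$ You instead work with the generic element $a = \sum_i T_i e_i$ and its reduced characteristic polynomial, prove irreducibility over $F(\mathbf{T}),$ and descend to a good specialization by Hilbert irreducibility in the Galois-theoretic form that preserves the group $G$ and hence the relation $H_F H_\alpha = G.$ Your method avoids local fields entirely and is arguably more uniform across characteristics; the paper's method has the advantage of not needing the Hilbertian property and of producing the valuation $w,$ which is reused elsewhere (the same local mechanism underlies Lemma \ref{L:P-1}).

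Two points need patching. First, the assertion that \emph{any} field finitely generated over its prime subfield is Hilbertian is false: finite fields are finitely generated over $\F_p$ and are not Hilbertian (the correct statement is that finitely generated fields that are either number fields or of positive transcendence degree are Hilbertian). You must dispose of the case where $k$ is finite separately, which is immediate since then $\Delta = E$ by Wedderburn's theorem and $P = E$ works trivially. Second, your reduction of the irreducibility of the generic reduced characteristic polynomial over $\bar{L}(\mathbf{T})$ to the genericity of the coefficients is not quite complete as stated: the coefficients of the characteristic polynomial of the generic matrix generate only a proper subfield of $\bar{L}(X_{ij}),$ and irreducibility over that subfield does not formally imply irreducibility over the larger field. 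The standard fix is to specialize the generic matrix to the companion matrix of the generic polynomial and use that Galois groups can only shrink under specialization, which forces the group over $\bar{L}(X_{ij})$ to be all of $S_n.$ With these two repairs your proof goes through.
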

\begin{proof}
Let $E \subset F$ be finite separable extensions of $k,$ and let
$\Delta$ be a central division algebra over $E.$ Enlarging $F,$ we
can assume that $\Delta \otimes_E F \simeq M_n(F).$ It is enough to
construct a discrete valuation $w$ of $E$ so that the completion
$E_w$ is locally compact and coincides with $F_{\tilde{w}}$ for some
extension $\tilde{w} \vert w.$ Indeed, we can then pick a separable
extension $\mathcal{P}$ of $E_w$ of degree $n$ and embed it into
$\Delta_w = \Delta \otimes_E E_w \simeq M_n(E_w).$ The argument
given in the proof of Lemma \ref{L:P-1} (or the standard Krasner's
Lemma, cf. \cite{NSW}, Lemma 8.1.6) enables us to construct a
maximal subfield $P \subset \Delta$ such that $P \otimes_E E_w
\simeq \mathcal{P}.$ Then
$$
[PF : F] \geqslant [\mathcal{P}F_{\tilde{w}} : F_{\tilde{w}}] =
[\mathcal{P} : E_w] = n.
$$
So, $[PF : F] = n,$ and therefore $P$ and $F$ are linearly disjoint
over $E.$

In characteristic zero, according to Proposition 1 in \cite{PR1},
for infinitely many primes $p$ there exists an embedding $F
\hookrightarrow \Q_p,$ and then the valuations $w$ and $\tilde{w}$
of $E$ and $F$ respectively, obtained as pull-backs of the $p$-adic
valuation, are as required. If $\mathrm{char}\: k = p > 0$ then we
need to use a suitable modification of the proof of Proposition 1 in
\cite{PR1}. Let $\mathbb{F}_p$ be the field with $p$ elements. There
exist algebraically independent $t_1, \ldots , t_r \in E$ such that
$F$ is a finite separable extension of $\ell :=  \mathbb{F}_p(t_1,
\ldots , t_r).$ Furthermore, we can pick a primitive element $a \in
F$ over $\ell$ having minimal polynomial $f$ of the form
$$
f(s, t_1, \ldots , t_r) = s^d + c_{d-1}(t_1, \ldots , t_r)s^{d-1} +
\cdots + c_0(t_1, \ldots , t_r)
$$
where $c_i(t_1, \ldots , t_r) \in \mathbb{F}_p[t_1, \ldots , t_r].$
Since $f$ is prime to its derivative $f_s,$ there exist polynomials
$g(s, t_1, \ldots , t_r),$ $h(s, t_1, \ldots , t_r)$ and $m(t_1,
\ldots , t_r)$ with coefficients in $\mathbb{F}_p$ such that
\begin{equation}\label{E:P101}
gf + hf_s = m \neq 0.
\end{equation}
One can find polynomials $\alpha_1(t), \ldots , \alpha_r(t) \in
\mathbb{F}_p[t]$ such that $\alpha_1(t) = t$ and $m(\alpha_1(t),
\ldots , \alpha_r(t)) \neq 0.$ Set $\varphi(s) = f(s, \alpha_1(t),
\ldots , \alpha_r(t)).$
%
%
Let $\beta$ be a root of $\varphi(s)$ (in a fixed algebraic closure
of $\mathbb{F}_p(t)$), and let $R = \mathbb{F}_p(t)(\beta).$ By
Tchebotarev's Density Theorem (cf. \cite{ANT}, Ch. VII, 2.4), one
can find a valuation $v$ of $\mathbb{F}_p(t)$ associated to some
irreducible polynomial in $\mathbb{F}_p[t]$ such that
\begin{equation}\label{E:P103}
v(m(\alpha_1(t), \ldots , \alpha_r(t))) = 0,
\end{equation}
and $R_{\tilde{v}} = \mathbb{F}_p(t)_v$ for some extension
$\tilde{v} \vert v.$ Let $\mathbb{F}_q = \mathcal{O}_{R ,
\tilde{v}}/\mathfrak{P}_{\tilde{v}} = \mathcal{O}_{\mathbb{F}_p(t) ,
v}/\mathfrak{p}_v$ be the residue field, and let $\beta^0,
\alpha_1^0, \ldots , \alpha_r^0$ be the images of $\beta,
\alpha_1(t), \ldots , \alpha_r(t)$ in $\mathbb{F}_q.$ It follows
from (\ref{E:P101}) and (\ref{E:P103}) that
\begin{equation}\label{E:P103.5}
f(\beta^0, \alpha_1^0, \ldots , \alpha_r^0) = 0, \ \ \text{but} \ \
f_s(\beta^0, \alpha_1^0, \ldots , \alpha_r^0) \neq 0.
\end{equation}
Let $\mathcal{F} = \mathbb{F}_q((T)).$ Pick $r$ algebraically
independent over $\mathbb{F}_q$ elements $\tilde{t}_1, \ldots ,
\tilde{t}_r \in \mathbb{F}_q[[T]]$ of the form:
\begin{equation}\label{E:P105}
\tilde{t}_1 = \alpha_1^0 + T, \ \ \tilde{t}_i = \alpha_i^0(\md T) \
\ \text{for} \ i > 1,
\end{equation}
and consider the embedding $\ell \hookrightarrow \mathcal{F}$
sending $t_i$ to $\tilde{t}_i$ for all $i = 1, \ldots , r.$ We claim
that
\begin{equation}\label{E:P106}
\overline{\iota(\ell)} = \mathcal{F}.
\end{equation}
Indeed, it follows from (\ref{E:P105}) that the image of
$\iota(t_1)$  in $\mathbb{F}_q[[T]]/T\mathbb{F}_q[[T]] =
\mathbb{F}_q$ generates $\mathbb{F}_q,$ which implies (e.g. by
Hensel's Lemma) that $\mathbb{F}_q \subset \overline{\iota(\ell)}.$
We then see from (\ref{E:P105}) that $T \in \overline{\iota(\ell)},$
and (\ref{E:P106}) follows. To complete the argument, we will now
that that $\iota$ can be extended to an embedding $\tilde{\iota}
\colon F \hookrightarrow \mathcal{F}$ as then the pullbacks of the
natural valuation on $\mathcal{F}$ will give us the required
valuations $w$ and $\tilde{w}$ on $E$ and $F$ respectively. Using
Hensel's Lemma, one obtains from (\ref{E:P103.5}) and (\ref{E:P105})
that $\mathcal{F}$ contains a root to $f(s, \tilde{t}_1, \ldots
\tilde{t}_r) = 0,$ and the existence of $\tilde{\iota}$ follows.
\end{proof}

\section{Proof of Theorem B}\label{S:B}

First, we will first single out some conditions on the field $K$
that imply the assertion of Theorem~B (cf. Theorem \ref{T:B-1}). We
will then verify these conditions for the fields considered in
Theorem B (cf. Proposition \ref{P:B-1}).

\vskip1mm

For a field $\mathcal{K}$  complete with respect to a discrete
valuation $v,$ we let $\Br^0(\mathcal{K}),$ or
$\Br^0_v(\mathcal{K}),$ denote the subgroup of $\Br(\mathcal{K})$
consisting of elements that split over an unramified extension of
$\mathcal{K}$ (in other words, $\Br^0(\mathcal{K}) =
\Br(\mathcal{K}_{\small{\mathrm{nr}}}/\mathcal{K})$ where
$\mathcal{K}_{\small{\mathrm{nr}}}$ is the maximal unramified
extension of $\mathcal{K}$)\footnote{As usual, the definition of an
unramified extension $\mathcal{L}/\mathcal{K}$ includes the
requirement that the corresponding extension of the residue fields
$\bar{\mathcal{L}}^{(v)}/\bar{\mathcal{K}}^{(v)}$ be separable.}. We
recall that for a central division algebra $\mathcal{D}$ over
$\mathcal{K},$ we have $[\mathcal{D}] \in \Br^0(\mathcal{K})$ if and
only if the center $Z(\bar{\mathcal{D}}^{(v)})$ of the residue
algebra is a separable extension of $\bar{\mathcal{K}}^{(v)}$ (cf.
\cite{W}, Theorem 3.4); the elements of $\Br^0(\mathcal{K})$ are
called ``inertially split" (cf. \cite{Sch}, \cite{W}). It follows
that if $n$ is relatively prime to $\mathrm{char}\:
\bar{\mathcal{K}}^{(v)}$ then the $n$-torsion subgroup
$\Br(\mathcal{K})_n$ is contained in $\Br^0(\mathcal{K}).$

Furthermore, we let $\rho$ or $\rho_v$ denote the reduction map
$\Br^0_v(\mathcal{K}) \to \mathrm{Hom}(G(v) , \Q/\Z)$ where $G(v)$
is the absolute Galois group of the residue field
$\bar{\mathcal{K}}^{(v)}$ (cf., for example, \cite{Se}, ch. XII, \S
3, or \cite{W}, (3.9)). Then $\Br'_v(\mathcal{K}) := \mathrm{Ker}\:
\rho_v$ is known to consist precisely of the classes of all
unramified (or ``inertial") division algebras (equivalently, those
division algebras that arise from the Azumaya algebras over the
valuation ring $\mathcal{O}_{\mathcal{K} , v},$ cf. \cite{W},
Theorem 3.2). More generally, given a discrete valuation $v$ of a
field $K,$ we let $\Br'_v(K)$ denote the subgroup of classes $[A]
\in \Br(K)$ for which $[A \otimes_K K_v] \in \Br'_v(K_v).$

\vskip3mm

\noindent {\bf Definition 3.1.} Let $K$ be an infinite field of
characteristic $\neq 2.$  We say that $K$ is {\it 2-balanced} if
there exists a set $V$ of discrete valuations of $K$ such that

\vskip2mm

(a) \parbox[t]{15cm}{for each $v \in V,$ the residue field
$\bar{K}^{(v)}$ satisfies (LD) and is of characteristic $\neq 2;$}

\vskip1mm

(b) \parbox[t]{15cm}{$\displaystyle \bigcap_{v \in V} \Br'_v(K)_2 =
\{ e \}$ (in other words, the 2-component of the unramified Brauer
group of $K$ with respect to $V$ is trivial).}

\addtocounter{thm}{1}

\vskip3mm

\begin{thm}\label{T:B-1}
Let $K$ be a 2-balanced field, and let $D_1 , D_2$ be two central
division algebras over $K$ such that $[D_1] , [D_2] \in \Br(K)_2.$
If $D_1$ and $D_2$ have the same maximal subfields then $D_1 \simeq
D_2.$
\end{thm}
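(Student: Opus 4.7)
The plan is to show $[D_1] = [D_2]$ in $\Br(K)$; combined with the observation that having the same maximal subfields forces $\dim_K D_1 = \dim_K D_2$ (any maximal subfield of a division algebra $D_i$ has degree $\sqrt{\dim_K D_i}$ over $K$), this yields $D_1 \simeq D_2$. Because $[D_1]$ and $[D_2]$ have exponent $2$, the class $\delta := [D_1][D_2]^{-1} = [D_1 \otimes_K D_2]$ also has exponent dividing $2$, so by part (b) of the $2$-balanced hypothesis it suffices to verify that $\delta \in \Br'_v(K)$ for every $v \in V$, i.e.\ that $\delta$ becomes unramified at each such $v$.

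Fix $v \in V$. Since $\dim_K D_i$ is a power of $2$ and $\mathrm{char}\: \bar{K}^{(v)} \neq 2$ by condition (a), Corollary \ref{C:P-1} applies: writing $(D_i)_v = M_{d_i}(\mathcal{D}_i)$ with $\mathcal{D}_i$ a central division $K_v$-algebra, we obtain $d_1 = d_2$ and that $\mathcal{D}_1, \mathcal{D}_2$ have the same maximal subfields over $K_v$. Moreover, as $[\mathcal{D}_i] \in \Br(K_v)_2$ and the residue characteristic is $\neq 2$, each $\mathcal{D}_i$ is inertially split: $[\mathcal{D}_i] \in \Br^0_v(K_v)$, and the residue center $\mathcal{E}_i := Z(\bar{\mathcal{D}}_i)$ is separable over $\bar{K}^{(v)}$. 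I would then invoke Lemma \ref{L:P-3}(ii), which is available precisely because $\bar{K}^{(v)}$ has property (LD), to conclude that $\mathcal{E}_1 = \mathcal{E}_2$.

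The final step — and the main obstacle — is to upgrade the equality $\mathcal{E}_1 = \mathcal{E}_2$ to equality of the reduction characters $\rho_v([\mathcal{D}_i]) \in \mathrm{Hom}(G(v), \Q/\Z)$. This is the point where exponent $2$ is genuinely used: the character $\rho_v([\mathcal{D}_i])$ has order equal to the ramification index, which for an inertially split algebra coincides with $[\mathcal{E}_i : \bar{K}^{(v)}] \in \{1, 2\}$. Hence each character is of order at most $2$ and is determined by its kernel, which by the standard structure theory corresponds precisely to $\mathcal{E}_i$; so $\rho_v([\mathcal{D}_1]) = \rho_v([\mathcal{D}_2])$. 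Consequently $\delta \otimes_K K_v$ lies in $\mathrm{Ker}\: \rho_v = \Br'_v(K_v)$, i.e.\ $\delta \in \Br'_v(K)$. Running this over all $v \in V$ and invoking (b), since $\delta \in \Br(K)_2$, we get $\delta = e$, hence $[D_1] = [D_2]$ and $D_1 \simeq D_2$. Outside of exponent $2$, the character $\rho_v([\mathcal{D}_i])$ need not be determined by its kernel, so this last reduction breaks down — which is precisely why the argument is tied to the $2$-torsion of $\Br(K)$.
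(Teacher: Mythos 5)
Your proposal is correct and follows essentially the same route as the paper's proof: reduce via condition (b) to checking that the two classes have the same ramification at each $v \in V$, pass to $K_v$ via Lemma \ref{L:P-1} and Corollary \ref{C:P-1}, apply Lemma \ref{L:P-3}(ii)(b) using (LD) to get $Z(\bar{\mathcal{D}}_1)=Z(\bar{\mathcal{D}}_2)$, and conclude $\rho_v([\mathcal{D}_1])=\rho_v([\mathcal{D}_2])$ because an order-$\leq 2$ character is determined by its kernel, which corresponds to the residue center. Your closing remark about where exponent $2$ enters matches the paper's reasoning exactly.
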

\begin{proof}
For $v \in V,$ we let $\rho_v$ denote the reduction map $\Br^0(K_v)
\to \mathrm{Hom}(G(v) , \Q/\Z).$ The assumption that
$\mathrm{char}\: \bar{K}^{(v)} \neq 2$ implies that $[D_1 \otimes_K
K_v] , [D_2 \otimes_K K_v] \in \Br^0(K_v),$ and then due to
condition (b) in the above definition, it is enough to show that
\begin{equation}\label{E:B-200}
\rho_v([D_1 \otimes_K K_v]) = \rho_v([D_2 \otimes_K K_v]),
\end{equation}
for all $v \in V.$ Fix $v \in V,$ and set $\mathcal{K} = K_v.$
According to Lemma \ref{L:P-1}, the algebras $D_1 \otimes_K
\mathcal{K}$ and $D_2 \otimes_K \mathcal{K}$ have the same maximal
\'etale subalgebras. Using Corollary \ref{C:P-1}, we see that
$$
D_i \otimes_K \mathcal{K} = M_{d_i}(\mathcal{D}_i), \ \ i = 1, 2,
$$
where $d_1 = d_2$ and the division algebras $\mathcal{D}_1$ and
$\mathcal{D}_2$ have the same maximal subfields. To prove
(\ref{E:B-200}), it suffices to show that
\begin{equation}\label{E:B-201}
\rho_v([\mathcal{D}_1]) = \rho_v([\mathcal{D}_2]).
\end{equation}
Let $\chi_i = \rho_v([\mathcal{D}_i]) \in \mathrm{Hom}(G(v) ,
\Q/\Z).$ Since $\mathcal{D}_1$ and $\mathcal{D}_2$ have the same
maximal subfields and $\bar{K}^{(v)} = \bar{\mathcal{K}}^{(v)}$
satisfies (LD), we conclude from Lemma \ref{L:P-3}(ii), case (b),
that $Z(\bar{\mathcal{D}}_1) = Z(\bar{\mathcal{D}}_2).$ It is known,
however, that the extension
$Z(\bar{\mathcal{D}}_i)/\bar{\mathcal{K}}$ corresponds
$\mathrm{Ker}\: \chi_i$ (cf. \cite{W}, Theorem 3.5). So, we obtain
that $\mathrm{Ker}\: \chi_1 = \mathrm{Ker}\: \chi_2,$ and since
$\chi_1$ and $\chi_2$ both have order two, we conclude that $\chi_1
= \chi_2,$ yielding (\ref{E:B-201}).
\end{proof}

\vskip2mm

\noindent {\bf Remark 3.3.} Using Remark 2.6 in place of Lemma
\ref{L:P-3}(ii), we obtain that if a field $K$ with the property
that the quaternion algebra $\displaystyle \left(\frac{-1 , -1}{K}
\right)$ is not a division algebra, has a set $V$ of discrete
valuations such that $\mathrm{char}\: \bar{K}^{(v)} \neq 2$ for each
$v \in V$ and $\bigcap_{v \in V} \Br'_v(K) = \{ e \}$ then for any
central division algebras $D_1$ and $D_2$ of degree four over $K$
such that $[D_1] , [D_2] \in \Br(K)_2,$ the fact that $D_1$ and
$D_2$ have the same maximal subfields implies that $D_1 \simeq D_2$
(we only need to observe that $\displaystyle \left(\frac{-1 ,
-1}{\bar{K}^{(v)}} \right)$ is not a division algebra for all $v \in
V$).

\addtocounter{thm}{1}

\vskip2mm

The following proposition establishes that the fields in the
statement of Theorem B are 2-balanced, completing thereby the proof
of the latter.
\begin{prop}\label{P:B-1}
Let $K = k(x_1, \ldots , x_r)$ be a purely transcendental extension
of a field $k$ which is either a totally imaginary number field with
a single diadic place (e.g. $k = \Q(\sqrt{-1})$) or a finite field
of characteristic $> 2.$ Then $K$  is 2-balanced.
\end{prop}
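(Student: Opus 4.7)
The strategy is to choose $V$ as the union of two families of discrete valuations on $K$. The first are the \emph{geometric} valuations trivial on $k$; it suffices to take those coming from prime divisors of $\mathbb{P}^r_k$. The second family (relevant only in the number-field case) consists of the \emph{Gauss extensions} to $K$ of the non-dyadic finite places of $k$. The dyadic place is deliberately excluded to ensure that the residue characteristic at each $v\in V$ is odd.

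\textbf{Checking (a).} For any geometric $v\in V$, the residue field $\bar{K}^{(v)}$ is finitely generated of transcendence degree $r-1$ over $k$, hence finitely generated over its prime subfield. For a Gauss extension $v$ of a non-dyadic finite place $w$ of $k$, the residue field is $\bar{k}^{(w)}(x_1,\ldots,x_r)$, finitely generated over the finite field $\bar{k}^{(w)}$. In both cases the residue characteristic is odd, and Proposition \ref{P:P-1} delivers (LD).

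\textbf{Checking (b).} Suppose $\alpha\in\Br(K)_2$ lies in $\Br'_v(K)$ for every $v\in V$. By purity for the Brauer group of $\mathbb{P}^r_k$ (equivalently, an iterated application of Faddeev's exact sequence), together with the classical identification $\Br(\mathbb{P}^r_k)=\Br(k)$, the vanishing of all residues of $\alpha$ at the geometric valuations forces $\alpha$ to lie in the image of $\Br(k)_2\hookrightarrow\Br(K)_2$. If $k=\F_q$ is finite, Wedderburn gives $\Br(k)=0$ and we are done. If $k$ is a totally imaginary number field with unique dyadic place $w_0$, then for each non-dyadic finite place $w$ of $k$ with Gauss extension $v$, the condition $\alpha\in\Br'_v(K)$ forces $\alpha\otimes_k k_w=0$: indeed, a nontrivial 2-torsion class in $\Br(k_w)$ is a ramified quaternion algebra $(u,\pi_w)_{k_w}$, and its Gauss lift to $K_v$ remains ramified since a non-square unit modulo $\pi_w$ stays a non-square in the purely transcendental extension $\bar{k}^{(w)}(x_1,\ldots,x_r)$. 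Hence the local invariants of $\alpha$ vanish outside $w_0$; by the Albert-Brauer-Hasse-Noether theorem (using the absence of real places) the sum of invariants is zero, and therefore the invariant at $w_0$ vanishes too, giving $\alpha=0$.

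\textbf{Main obstacle.} The delicate step is the purity/Faddeev reduction to $\Br(k)_2$, which requires $V$ to contain enough geometric valuations to detect every prime divisor of a smooth projective model such as $\mathbb{P}^r_k$; once it is in hand, the arithmetic descent is a direct application of ABHN in the number-field case and is vacuous in the finite-field case.
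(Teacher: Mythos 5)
Your proposal is correct and follows essentially the same route as the paper: the same set $V$ (divisorial valuations trivial on $k$ together with Gauss extensions of the non-dyadic places), the same reduction of the unramified $2$-torsion to $\Br(k)_2$ via iterated Faddeev/purity, and the same use of Albert--Brauer--Hasse--Noether with the single dyadic place and total imaginarity to kill the remaining invariant. The only cosmetic differences are that the paper carries out the Faddeev induction one variable at a time and merely asserts that ramified classes over $k'$ stay ramified under the Gauss lift, whereas you verify this explicitly for quaternion symbols.
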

\begin{proof}
Let $K_i = k(x_1, \ldots , x_{i-1} , x_{i+1} , \ldots x_r),$ and let
$V_i$ be the set of all valuations of $K = K_i(x_i)$ that are
trivial on $K_i.$ Then for $v \in V_i$ the residue field
$\bar{K}^{(v)}$ is a finite extension of $K_i,$ hence a finitely
generated field of characteristic $\neq 2.$ Invoking Proposition
\ref{P:P-1}, we see that $\bar{K}^{(v)}$ satisfies (LD), and
therefore
$$
V_0 := \bigcup_{i = 1}^r V_i
$$
satisfies condition (a) of Definition 3.1. Henceforth, we will
identify $\Br(k)$ with a subgroup of $\Br(K)$ using the natural
embedding. We claim that
\begin{equation}\label{E:B-305}
\bigcap_{v \in V_0} \Br'_v(K)_2 = \Br(k)_2.
\end{equation}
This is proved by induction on $r$ using the following consequence
of Faddeev's exact sequence (cf. \cite{GS}, Cor. 6.4.6, or
\cite{Pi}, \S 19.5; for the case of a nonperfect field of constants,
see \cite{GMS}, Example 9.21 on p. 26, and \cite{RoSiTi}): Let $F$
be a field of characteristic $\neq 2,$ and let $V^F$ be the set of
valuations of the field of rational functions $F(x)$ that are
trivial on $F;$ then
\begin{equation}\label{E:B-306}
\bigcap_{v \in V^F} \Br'_v(F(x))_2 = \Br(F)_2.
\end{equation}
For $r = 1,$ (\ref{E:B-305}) is identical to (\ref{E:B-306}), and
there is nothing to prove. For $r > 1,$ set $k' = k(x_r)$ and $V'_0
= \bigcup_{i = 1}^{r-1} V_i.$ By induction hypothesis,
\begin{equation}\label{E:B-307}
\bigcap_{v \in V'_0} \Br'_v(K)_2 = \Br(k')_2.
\end{equation}
On the other hand, there is a natural bijection between $V^{k'}$ and
$V_r,$ $v \mapsto \hat{v}.$ Clearly, if a central division algebra
$\Delta$ over $k'$ (of exponent two) is ramified at $v \in V^{k'},$
i.e. $[\Delta] \notin \Br'_v(k')_2,$ then $\hat{\Delta} = \Delta
\otimes_{k'} K$ is ramified at $\hat{v},$ i.e. $[\hat{\Delta}]
\notin \Br'_{\hat{v}}(K).$ Thus, if a central division algebra $D$
over $K$ represents an element from the left-hand side of
(\ref{E:B-305}), then by (\ref{E:B-307}) we can write $D = \Delta
\otimes_{k'} K$ for some central division algebra $\Delta$ over
$k'.$ Furthemore, it follows from our previous remark that $[\Delta]
\in \bigcap_{v \in V^{k'}} \Br'_v(k').$ So, using (\ref{E:B-306})
for $F = k,$ we see that $[\Delta] \in \Br(k),$ proving
(\ref{E:B-305}). It immediately follows that $V = V_0$ is as
required if $k$ is finite.

Let now $k$ be a totally imaginary number field with a single diadic
place. It follows from the Albert-Hasse-Brauer-Noether Theorem (cf.
\cite{Pi}, \S 18.4) that for the set $W$ of all non-diadic
nonarchimedean places of $k$ the natural map
$$
\Br(k) \longrightarrow \bigoplus_{w \in W} \Br(k_w)
$$
is injective. Since $\Br'_w(k_w) = \{ e \},$ this can be restated as
\begin{equation}\label{E:B-310}
\bigcap_{w \in W} \Br'_w(k) = \{ e \}.
\end{equation}
For $w \in W,$ we let $\tilde{w}$ denote its natural extension to
$K$ given by
$$
\tilde{w}\left( \sum_{i_1, \ldots , i_r} a_{i_1 \ldots i_r}
x_1^{i_1} \cdots x_r^{i_r} \right) = \inf w(a_{i_1 \ldots i_r}).
$$
Then the residue field $\bar{K}^{(\tilde{w})} = \bar{k}^{(w)}(x_1,
\ldots , x_r)$ is a finitely generated field of characteristic $\neq
2,$ hence satisfies (LD) (cf. Proposition \ref{P:P-1}). It follows
that $V := V_0 \cup \tilde{W},$ where $\tilde{W} = \{ \tilde{w} \:
\vert \: w \in W \},$ satisfies condition (a) of Definition 3.1. At
the same time, using (\ref{E:B-305}) and (\ref{E:B-310}) and arguing
as above, we see that
$$
\bigcap_{v \in V} \Br'_v(K)_2 = \{ e \},
$$
which is condition (b) of Definition 3.1. Thus, $V$ is as required.
\end{proof}

\vskip2mm

As we already mentioned in the introduction, for any central
division $K$-algebra $D,$ the opposite algebra $D^{\mathrm{op}}$ has
the same maximal subfields, but $D \not\simeq D^{\mathrm{op}}$
unless $[D] \in \Br(K)_2.$ It should be noted, however, that the
associated norm one groups groups $\mathrm{SL}_{1 , D}$ and
$\mathrm{SL}_{1 , D^{\mathrm{op}}}$ are always $K$-isomorphic. So,
we would like to point out the following general construction of
division algebras $D_1$ and $D_2$ that have the same maximal
subfields, but for which $\mathrm{SL}_{1 , D_1} \not\simeq
\mathrm{SL}_{1 , D_2}.$ Let $\Delta_1$ and $\Delta_2$ be two central
division algebras over $K$ of relatively prime degrees $n_1 , n_2 >
2.$ Then $D_1 = \Delta_1 \otimes_K \Delta_2$ and $D_2 = \Delta_1
\otimes_K \Delta_2^{\mathrm{op}}$ are division algebras of degree $n
= n_1n_2,$ which are neither isomorphic nor anti-isomorphic, so the
corresponding norm one groups are not $K$-isomorphic. At the same
time, if $P$ is a maximal subfield of $D_1$ then it splits
$\Delta_1$ and $\Delta_2,$ hence also $\Delta_2^{\mathrm{op}}.$ It
follows that $P$ splits $D_2,$ and therefore is isomorphic to its
maximal subfield. A more general perspective on this construction
can be derived from the following (known) statement (to see the
connection, one observes that for the algebras $D_1$ and $D_2$ as
above, the classes $[D_1] , [D_2] \in \Br(K)$ generate the same
subgroup, which coincides with the subgroup generated by
$[\Delta_1]$ and $[\Delta_2],$ hence $[D_2] = m[D_1]$ for some $m$
relatively prime to $n$).
\begin{lemma}\label{L:B-301}
Let $D$ be a central division algebra of degree $n$ over a field
$K.$ Then for any $m \geqslant 1$ which is relatively prime to $n,$
the class $m[D] \in \Br(K)$ is represented by a central division
algebra $D_m$ of the same degree $n$ and having the same maximal
subfields as $D.$
\end{lemma}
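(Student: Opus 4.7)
The plan is to take $D_m$ to be the underlying central division algebra in the Wedderburn decomposition of the tensor product $D^{\otimes m},$ so that by construction $[D_m] = m[D]$ in $\Br(K).$ It then remains to verify two things: that $\deg D_m = n,$ and that $D$ and $D_m$ share the same maximal subfields. Both will follow from the single assertion that $D$ and $D_m$ admit the same finite separable splitting fields.

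The key step is to prove that a finite separable extension $L/K$ splits $D$ if and only if it splits $D_m.$ One direction is immediate: if $L$ splits $D,$ then $[D \otimes_K L] = 0,$ so $m[D \otimes_K L] = [D_m \otimes_K L] = 0.$ For the converse, suppose $L$ splits $D_m,$ i.e.\ $m[D \otimes_K L] = 0$ in $\Br(L).$ Then the order of $[D \otimes_K L]$ divides $m;$ on the other hand it divides the exponent of $[D],$ which divides the index $\mathrm{ind}([D]) = n.$ Since $\gcd(m,n) = 1,$ this common order must be $1,$ so $L$ splits $D.$

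With this in hand, $D$ and $D_m$ have the same set of finite separable splitting fields. Since the index of a central simple algebra equals the greatest common divisor (indeed, the minimum) of the degrees $[L:K]$ of its finite separable splitting fields, we conclude $\mathrm{ind}(D_m) = \mathrm{ind}(D) = n,$ so $\deg D_m = n$ as well. Finally, a degree-$n$ extension $L/K$ embeds as a maximal subfield of a degree-$n$ central division algebra $\Delta$ over $K$ precisely when $L$ splits $\Delta$ (a classical consequence of the double centralizer theorem). Applied to $D$ and $D_m,$ which have the same degree and the same splitting fields, this yields that their maximal subfields coincide, completing the proof.

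I do not expect any serious obstacle here; the only mildly delicate point is making sure the coprimality of $m$ and $n$ is used in the right place, namely to force the order of the restricted class $[D\otimes_K L]$ (which is controlled by $\exp([D])\mid n$) to be $1$ once it is known to divide $m.$
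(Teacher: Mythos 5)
Your proof is correct, but it takes a genuinely different route from the paper's. You work entirely at the level of the Brauer group: you show $D$ and $D_m$ have the same finite separable splitting fields by combining $\exp([D])\mid\mathrm{ind}([D])=n$ with $\gcd(m,n)=1$ to force the restricted class $[D\otimes_K L]$ to vanish, then recover $\deg D_m=n$ from the characterization of the index as the minimum of the degrees of separable splitting fields, and finally identify the maximal subfields via the classical criterion that a degree-$n$ field embeds in a degree-$n$ division algebra iff it splits it (noting, as you implicitly do, that all maximal subfields of a central division algebra of degree $n$ have degree exactly $n$). The paper instead argues constructively with the Double Centralizer Theorem: it shows by induction that $D^{\otimes m}\simeq M_{n^{m-1}}(D_m)$ with $D_m$ a central simple algebra of degree $n$ into which every maximal subfield of $D$ embeds, and only then uses coprimality --- choosing $\ell$ with $\ell m\equiv 1 \ (\mathrm{mod}\ n)$ and running the same construction on the division part of $D_m$ to land back on $[D]$ --- to conclude that $D_m$ is in fact a division algebra and that the embedding of maximal subfields goes both ways. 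Your argument is shorter and more conceptual but leans on the exponent--index relation and the splitting-field characterization of the index; the paper's is more elementary and self-contained, using only Wedderburn and the Double Centralizer Theorem, and produces the algebra $D_m$ together with the subfield embeddings explicitly. Both uses of coprimality are essentially the same arithmetic fact, deployed at different points.
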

\begin{proof}
First, we observe that if $A_1$ and $A_2$ are two central simple
algebras over $K$ of the same degree $d$ containing a field
extension $P/K$ of degree $d$ then $A := A_1 \otimes_K A_2$ is
isomorphic to $M_d(A')$ where $A'$ is a central simple algebra of
degree $d$ that also contains $P.$ Indeed, we have $A_1 \otimes_K P
\simeq M_d(P)$ so $A$ contains $B := M_d(K).$ Using the Double
Centralizer Theorem, we conclude that $A \simeq B \otimes_K C_A(B),$
i.e. $A \simeq M_d(A')$ where $A' = C_A(B)$ is a central simple
algebra of degree $d.$ Since
$$
A \otimes_K P \simeq (A_1 \otimes_K P) \otimes_P (A_2 \otimes_K P)
\simeq M_{d^2}(P),
$$
we see that $P$ splits $A'$ and therefore is isomorphic to a maximal
subfield of the latter (cf. \cite{Pi}, \S 13.3). This remark
combined with  simple induction shows that for any $m \geqslant 1$
we have
\begin{equation}\label{E:B-302}
D^{\otimes m} \simeq M_{n^{m-1}}(D_m)
\end{equation}
where $D_m$ is a central simple algebra of degree $n$ such that
every maximal subfield of $D$ embeds in $D_m.$ Let now $m$ be
relatively prime to $n,$ and pick $\ell \geqslant 1$ so that $\ell m
\equiv 1(\md n),$ hence $\ell[D_m] = [D].$ If $D_m = M_s(\Delta)$
where $\Delta$ is a division algebra of degree $t,$ $st=n,$ then
$\ell [D_m] = \ell [\Delta].$ Applying (\ref{E:B-302}) to $\Delta$
and $\ell$ in place of $D$ and $m,$ we see that $\ell [\Delta]$ is
represented by a central simple algebra $\Delta_{\ell}$ of degree
$t.$ Then $[\Delta_{\ell}] = [D]$ is possible only if $D_m$ is
isomorphic to $\Delta,$ hence a division algebra, and $\Delta_{\ell}
\simeq D.$ Furthermore, as we have seen, every maximal subfield of
$D$ embeds in $D_m,$ and every maximal subfield of $D_m \simeq
\Delta$ embeds in $\Delta_{\ell} \simeq D,$ i.e. $D$ and $D_m$ have
the same maximal subfields.
\end{proof}

\vskip0.5mm

Lemma \ref{L:B-301} seems to suggest that as a potential
generalization of (*) to arbitrary division algebras one should
consider the question of whether for two central division algebras
$D_1$ and $D_2$ over a field $K,$ the classes $[D_1]$ and $[D_2]$
generate the same subgroup of $\Br(K).$ Unfortunately, the answer to
this question is negative already over number fields for algebras of
any degree (exponent) $n > 2.$ To see this, one can pick four
nonarchimedean places $v_1, \ldots , v_4$ of a given number $K,$ and
then consider, for any $n > 2,$ the division algebras $D_1$ and
$D_2$ over $K$ having local invariants $1/n, 1/n, -1/n, -1/n$ and
$1/n, -1/n, 1/n, -1/n$ respectively at $v_1, \ldots , v_4,$ and $0$
everywhere else (cf. \cite{PR}, Example 6.5). It follows from
(\cite{Pi}, Cor.~b in \S 18.4) that $D_1$ and $D_2$ have the same
maximal subfields; at the same time, $[D_1]$ and $[D_2]$ generate
different subgroups of $\Br(K).$ Thus, there appears to be no
sensible analog of Theorem B for algebras of exponent $> 2.$ On the
other hand, in addition to generalizing Theorem B to other fields,
one may consider similar questions for algebras with involution.
More precisely, let $(A_1 , \tau_1)$ and $(A_2 , \tau_2)$ be two
central simple algebras over $K$ with involutions of the first kind
(i.e., acting trivially on $K$) and of the same type (symplectic or
orthogonal) - then of course $[A_1] , [A_2] \in \Br(K)_2.$ Assume
that $A_1$ and $A_2$ have the same isomorphism classes of maximal
\'etale subalgebras invariant under the involutions\footnote{This
assumption has two possible interpretations: for any
$\tau_1$-invariant maximal \'etale subalgebra $E_1 \subset A_1$
there exist a $\tau_2$-invariant maximal \'etale subalgebra $E_2
\subset A_2$ such that $E_1 \simeq E_2$ as $K$-algebras, or such
that $(E_1 , \tau_1 \vert E_1) \simeq (E_2 , \tau_2 \vert E_2)$ as
$K$-algebras with involutions (and vice versa).}. In what situations
can one guarantee that $A_1 \simeq A_2$ as $K$-algebras? $(A_1 ,
\tau_1) \simeq (A_2 , \tau_2)$ as $K$-algebras with involutions?
(Affirmative) results in this direction may lead to some progress on
the problem, mentioned in the introduction, of when two weakly
isomorphic forms of an absolutely almost simple algebraic group are
necessarily isomorphic, particularly for types $B_n$ and $C_n.$

\vskip.5cm

\section{Proof of Theorem A}\label{S:A}

For a quaternion algebra $\displaystyle D = \left(\frac{a ,
b}{K}\right)$ corresponding to $a , b \in K^{\times}$ we let $q_D$
denote the quadratic form
$$
q_D(s, t, u) = as^2 + bt^2 - abu^2.
$$
Then $D$ is a division algebra if and only if $q_D$ does not
represent nonzero squares, in which case the maximal subfields of
$D$ are isomorphic to the quadratic extensions of the form
$K(\sqrt{d})$ where $d \in K^{\times}$ is represented by $q_D.$
Furthermore, it is known (\cite{Pi}, \S 1.7) that two central
quaternion algebras $D_1$ and $D_2$ over $K$ are isomorphic if and
only if the corresponding quadratic forms $q_{D_1}$ and $q_{D_2}$
are equivalent. Thus, (*) reduces to the statement that the
quadratic forms $q_{D_1}$ and $q_{D_2}$ associated to two quaternion
division algebras $D_1$ and $D_2$ are equivalent given that they
represent the same elements of $K.$

\vskip1mm

The proof of Theorem A is based on Faddeev's exact sequence (cf.
\cite{GS}, Cor. 6.4.6,  \cite{Pi}, \S 19.5, \cite{GMS}, Example 9.21
on p. 26, or \cite{RoSiTi}). Let $K^{\mathrm{sep}}$ be a separable
closure of $K,$ and $G = \Ga(K^{\mathrm{sep}}/K)$ be its absolute
Galois group. Furthermore, let $V$ be the set of valuations of
$K(x)$ corresponding to all irreducible polynomials $p(x) \in K[x].$
For each $v \in V,$ we fix its extension $\tilde{v}$ to
$K^{\mathrm{sep}}(x),$ and let $G(v) = G(\tilde{v} \vert v)$ be the
corresponding decomposition group; we observe that $G(v)$ is
naturally identified with the absolute Galois group of the residue
field $\overline{K(x)}^{(v)}.$ Then we have the following exact
sequence:
\begin{equation}\label{E:A-1}
0 \to \Br(K) \stackrel{\iota}{\longrightarrow}
\Br(K^{\mathrm{sep}}(x)/K(x)) \stackrel{\phi}{\longrightarrow}
\bigoplus_{v \in V} \mathrm{Hom}(G(v) , \Q/\Z).
\end{equation}
in which $\iota$ is the natural embedding $[A] \mapsto [A \otimes_K
K(x)],$ and $\phi = (\phi_v),$ where the local components $\phi_v$
are related to the reduction maps $\rho_v \colon \Br^0_v(K(x)_v) \to
\mathrm{Hom}(G(v) , \Q/\Z)$ by $\phi_v = \rho_v \circ \lambda_v$
with $\lambda_v \colon \Br(K(x)) \to \Br(K(x)_v)$ being the natural
map (notice that $\lambda_v(\Br(K^{\mathrm{sep}}(x)/K(x))) \subset
\Br^0_v(K(x)_v)$). Since $\mathrm{char}\: K \neq 2,$ we have the
inclusion $\Br(K(x))_2 \subset \Br(K^{\mathrm{sep}}(x)/K(x)),$ so
(\ref{E:A-1}) gives rise to the following exact sequence
\begin{equation}\label{E:A-2}
0 \to \Br(K)_2 \stackrel{\iota}{\longrightarrow} \Br(K(x))_2
\stackrel{\phi}{\longrightarrow} \bigoplus_{v \in V}
\mathrm{Hom}(G(v) , \mu_2) \ \ \text{where} \ \ \mu_2 = \{ \pm 1 \}.
\end{equation}
\begin{lemma}\label{L:1}
{\rm (Cf. \cite{GaSa}, Proposition 3.5)} Let $D_1$ and $D_2$ be two
central quaternion division algebras over $K(x)$ having the same
maximal subfields. Then $\phi([D_1]) = \phi([D_2]).$
\end{lemma}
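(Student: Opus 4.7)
The plan is to verify $\phi_v([D_1]) = \phi_v([D_2])$ componentwise for every $v \in V$, using a local version of the strategy already executed in the proof of Theorem \ref{T:B-1}.

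Fix $v \in V$ and set $\mathcal{K} = K(x)_v$. First I would apply Lemma \ref{L:P-1} to transfer the ``same maximal subfields'' hypothesis from $K(x)$ to $\mathcal{K}$ (in the \'etale sense), and then invoke Corollary \ref{C:P-1}---applicable because $\dim D_i = 4$ is prime to $\mathrm{char}\: K$---to write $D_i \otimes_{K(x)} \mathcal{K} \simeq M_{d_i}(\mathcal{D}_i)$ with a common value $d := d_1 = d_2$ and central $\mathcal{K}$-division algebras $\mathcal{D}_1, \mathcal{D}_2$ sharing the same maximal subfields. A dimension count leaves only two possibilities: either $d = 2$, in which case both $D_i \otimes_{K(x)} \mathcal{K}$ split and $\phi_v([D_1]) = \phi_v([D_2]) = 0$, or $d = 1$, in which case $\mathcal{D}_1$ and $\mathcal{D}_2$ are quaternion division algebras over $\mathcal{K}$, hence of prime degree $2$.

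In the nontrivial case, I would apply Lemma \ref{L:P-3}(ii), case (a), which is tailor-made for prime degree: since $\mathrm{char}\: \bar{K(x)}^{(v)} = \mathrm{char}\: K \neq 2$, the classes $[\mathcal{D}_i]$ automatically lie in $\Br^0(\mathcal{K})$ so the residue centers $Z(\bar{\mathcal{D}}_i)$ are separable over $\bar{\mathcal{K}}^{(v)}$, and the lemma yields $Z(\bar{\mathcal{D}}_1) = Z(\bar{\mathcal{D}}_2)$. By the standard description of the residue map (cf.\ \cite{W}, Theorem 3.5), the character $\phi_v([D_i]) = \rho_v([\mathcal{D}_i]) \in \mathrm{Hom}(G(v), \mu_2)$ is determined by the extension $Z(\bar{\mathcal{D}}_i)/\bar{K(x)}^{(v)}$ corresponding to $\mathrm{Ker}\: \rho_v([\mathcal{D}_i])$. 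Since these extensions agree and both characters have order dividing $2$, equality of kernels forces equality of the characters themselves.

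There is no serious obstacle to this plan---essentially all the content has been packaged into Lemma \ref{L:P-3}(ii)(a), whose prime-degree case applies cleanly in degree $2$. The only step that deserves a brief sanity check is the dichotomy $d \in \{1, 2\}$ together with the verification that when $d = 2$ the completion $D_i \otimes_{K(x)} \mathcal{K}$ really is split (so that $\phi_v$ vanishes on it), which is immediate from $D_i \otimes_{K(x)} \mathcal{K} \simeq M_2(\mathcal{K})$.
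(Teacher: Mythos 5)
Your proposal is correct and follows essentially the same route as the paper: fix $v$, pass to the completion via Lemma \ref{L:P-1} and Corollary \ref{C:P-1}, apply Lemma \ref{L:P-3}(ii), case (a), to identify the residue centers, and conclude $\chi_1=\chi_2$ from equality of kernels for order-two characters. Your explicit dichotomy $d\in\{1,2\}$ is a minor clarification the paper leaves implicit (the split case being trivial), but the argument is the same.
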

\begin{proof} (Cf. the proof of Theorem \ref{T:B-1}) Fix $v \in V.$
By Corollary \ref{C:P-1}, we have
$$
D_i \otimes_{K(x)} K(x)_v = M_{d_i}(\mathcal{D}_i), \ \ i = 1, 2,
$$
where $d_1 = d_2$ and the division algebras $\mathcal{D}_1$ and
$\mathcal{D}_2$ have the same maximal subfields. Using
Lemma~\ref{L:P-3}(ii), case (a), we see that $Z(\bar{\mathcal{D}}_1)
= Z(\bar{\mathcal{D}}_2).$ Since the extension
$Z(\mathcal{D}_i)/\overline{K(x)}^{(v)}$ corresponds to the subgroup
$\mathrm{Ker}\: \chi_i \subset G(v),$ where $\chi_i =
\rho_v([\mathcal{D}_i]),$ we conclude that $\mathrm{Ker}\: \chi_i =
\mathrm{Ker}\: \chi_2,$ and eventually $\chi_1 = \chi_2,$ implying
that $\phi_v([D_1]) = \phi_v([D_2]).$
\end{proof}

\vskip.5mm

\begin{cor}\label{C:1}
Let $D_1$ and $D_2$ be as in Lemma \ref{L:1}. Then there exist a
central quaternion algebra $D$ over $K$ such that $D_1
\otimes_{K(x)} D_2 \simeq M_2(D \otimes_K K(x)).$
\end{cor}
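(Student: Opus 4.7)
The plan is to combine the ramification vanishing from Lemma \ref{L:1} with an index estimate derived from the hypothesis that $D_1$ and $D_2$ share maximal subfields. Since $[D_1],[D_2] \in \Br(K(x))_2$, we have $[D_1 \otimes_{K(x)} D_2] = [D_1] + [D_2]$ in $\Br(K(x))_2$, and Lemma \ref{L:1} gives $\phi([D_1]) = \phi([D_2])$. Because the target $\bigoplus_{v \in V} \mathrm{Hom}(G(v),\mu_2)$ of $\phi$ in (\ref{E:A-2}) is itself 2-torsion, applying $\phi$ to the sum yields $\phi([D_1 \otimes_{K(x)} D_2]) = 2\phi([D_1]) = 0$. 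Exactness of (\ref{E:A-2}) therefore produces a class $[D] \in \Br(K)_2$ with $\iota([D]) = [D_1 \otimes_{K(x)} D_2]$ in $\Br(K(x))_2$.

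The substantive step is to verify that this class can be represented by a quaternion algebra, equivalently, that $D_1 \otimes_{K(x)} D_2$ has index at most $2$. For this, I would pick any maximal subfield $L$ of $D_1$: it is a quadratic extension of $K(x)$ that splits $D_1$. By the hypothesis that $D_1$ and $D_2$ have the same maximal subfields, $L$ admits a $K(x)$-embedding into $D_2$, hence is a maximal subfield of $D_2$ and splits $D_2$ as well. Therefore $L$ splits $D_1 \otimes_{K(x)} D_2$, and Wedderburn's theorem forces its index to divide $[L : K(x)] = 2$. Combined with the standard fact that the natural map $\Br(K) \to \Br(K(x))$ preserves index (the leading-coefficient argument shows $\Delta \otimes_K K(x)$ remains a division algebra whenever $\Delta$ is), this tells us $[D]$ has index at most $2$. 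We may therefore choose $D$ to be the division quaternion algebra representing $[D]$ when the index equals $2$, and $D = M_2(K)$ when $[D]$ is trivial; in either case, $D$ is a central simple $K$-algebra of dimension $4$ with the prescribed Brauer class.

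To finish, $D_1 \otimes_{K(x)} D_2$ and $M_2(D \otimes_K K(x))$ are central simple $K(x)$-algebras of the same dimension $16$ sharing the Brauer class $\iota([D])$, so they are $K(x)$-isomorphic by Wedderburn. The only nontrivial point in the argument is the index estimate, but the conceptual input is clean: shared maximal subfields automatically supply a common quadratic splitting field for the tensor product, which rules out biquaternion behavior; everything else is routine manipulation of the Faddeev sequence.
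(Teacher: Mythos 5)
Your proof is correct and follows essentially the same route as the paper: Faddeev exactness of (\ref{E:A-2}) combined with Lemma \ref{L:1} to descend the class of $D_1\otimes_{K(x)}D_2$ to $\Br(K)_2$, the common maximal subfield to bound the index by $2$ (the paper cites this as Lemma 1.5.2 of Gille--Szamuely where you rederive it via a common splitting field), and the fact that the index is preserved under $K\to K(x)$ (which the paper packages as Wedderburn uniqueness applied to $\mathcal{D}\otimes_K K(x)$).
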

\begin{proof}
It follows from Lemma \ref{L:1} that there exists a division algebra
$\mathcal{D}$ over $K$ such that $$[D_1 \otimes_{K(x)} D_2] =
[\mathcal{D} \otimes_K K(x)].$$ Notice that $\mathcal{D} \otimes_K
K(x)$ is also a division algebra. On the other hand, since $D_1$ and
$D_2$ possess a common subfield, we have $D_1 \otimes_{K(x)} D_2
\simeq M_2(\Delta)$ for some central quaternion algebra $\Delta$
over $K(x)$ (cf. \cite{GS}, Lemma 1.5.2). From the uniqueness in
Wedderburn's Theorem we conclude that either $\mathcal{D} = K$ or
$\mathcal{D}$ is a quaternion algebra over $K.$ Set $D = M_2(K)$ in
the former case, and $D = \mathcal{D}$ in the latter. Then our
construction implies that $D$ is as required.
\end{proof}

\vskip.5mm

Now, to complete the proof of Theorem A, it remains to show that in
the notations of Corollary~\ref{C:1}, the class $[D]$ in $\Br(K)$ is
trivial. For this, we need to make a couple of preliminary
observations.
\begin{lemma}\label{L:A-201}
Let $\mathcal{K}$ be a field with a discrete valuation $v$ such that
$\mathrm{char}\: \bar{\mathcal{K}}^{(v)} \neq 2.$ For $i = 1, 2,$
let $a_i , b_i \in \mathcal{K}^{\times}$ be such that $v(a_i) =
v(b_i) = 0,$ and set
$$
\mathcal{D}_i = \left(\frac{a_i , b_i}{\mathcal{K}}  \right) \ \ , \
\ \check{\mathcal{D}}_i = \left(\frac{\bar{a}_i ,
\bar{b}_i}{\bar{\mathcal{K}}^{(v)}} \right),
$$
where $\bar{a}_i , \bar{b}_i$ are the images of $a_i , b_i$ in
$\bar{\mathcal{K}}^{(v)}.$ Assume that $\mathcal{D}_1$ and
$\mathcal{D}_2$ are division algebras having the same maximal
subfields. Then if one of the $\check{\mathcal{D}}_i$'s is a
division algebra then both of them are, in which case they have the
same maximal subfields.
\end{lemma}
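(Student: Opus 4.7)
The plan is to pass to the completion $\mathcal{K}_v$, invoke the inertial/unramified Brauer group theory (cf. \cite{W}, Theorem 3.2), and then adapt the Hensel-lifting argument used in the proof of Lemma \ref{L:P-3}(ii).

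I first set $A_i = \mathcal{D}_i \otimes_{\mathcal{K}} \mathcal{K}_v$ for $i = 1, 2$. Since $v(a_i) = v(b_i) = 0$, each $A_i$ is inertial over $\mathcal{K}_v$: its standard quaternion presentation gives an Azumaya $\mathcal{O}_{\mathcal{K}_v}$-order whose reduction modulo the maximal ideal is $\check{\mathcal{D}}_i$. In particular, $A_i$ is a division algebra over $\mathcal{K}_v$ if and only if $\check{\mathcal{D}}_i$ is a division algebra over $\bar{\mathcal{K}}^{(v)}$. Moreover, by Lemma \ref{L:P-1}, the algebras $A_1$ and $A_2$ have the same maximal \'etale subalgebras over $\mathcal{K}_v$.

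Now suppose (without loss of generality) that $\check{\mathcal{D}}_1$ is a division algebra. Then $A_1$ is a quaternion division algebra over $\mathcal{K}_v$, and Corollary \ref{C:P-1} applied to $A_1, A_2$ forces $A_2$ to be a quaternion division algebra over $\mathcal{K}_v$ as well, sharing the same maximal subfields with $A_1$. The equivalence established in the previous paragraph then yields that $\check{\mathcal{D}}_2$ is a division algebra.

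It remains to transfer the equality of maximal subfields from the $A_i$'s to the residues $\check{\mathcal{D}}_i$. Given a maximal subfield $\mathcal{P}_1 \subset \check{\mathcal{D}}_1$, which is separable since $\mathrm{char}\: \bar{\mathcal{K}}^{(v)} \neq 2$, I write $\mathcal{P}_1 = \bar{\mathcal{K}}^{(v)}(\bar{\alpha})$ and lift $\bar{\alpha}$ to an element $\alpha$ of the Azumaya order of $A_1$. By Hensel's lemma, $\mathcal{L}_1 := \mathcal{K}_v(\alpha)$ is an unramified quadratic extension of $\mathcal{K}_v$ inside $A_1$, hence a maximal subfield, with residue $\mathcal{P}_1$. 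Since $A_1$ and $A_2$ have the same maximal subfields, there exists an embedding $\mathcal{L}_1 \hookrightarrow A_2$; and as $\mathcal{K}_v$ is complete, $v$ extends uniquely to the image, so passing to residues provides an embedding $\mathcal{P}_1 \hookrightarrow \check{\mathcal{D}}_2$. Symmetry then gives the desired equality of maximal subfields. The only technical point is the standard Hensel-lifting that guarantees $\mathcal{L}_1$ is unramified of degree $2$; the rest is a direct application of the earlier preliminary lemmas to the completion $\mathcal{K}_v$.
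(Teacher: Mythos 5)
Your proof is correct, but it takes a genuinely different route from the paper's. You pass to the completion $\mathcal{K}_v$, observe that the standard order $\mathcal{O}_{\mathcal{K}_v}\langle i,j\rangle$ is Azumaya (its trace-form discriminant $-16a_i^2b_i^2$ is a unit since $\mathrm{char}\:\bar{\mathcal{K}}^{(v)}\neq 2$ and $v(a_i)=v(b_i)=0$) with reduction $\check{\mathcal{D}}_i$, and then run everything through the Brauer-theoretic machinery: injectivity of the reduction map on $\Br'_v(\mathcal{K}_v)$ gives the ``division iff division'' equivalence, Lemma \ref{L:P-1} and Corollary \ref{C:P-1} transfer the same-maximal-subfields hypothesis to the completions, and the residue-field argument of Lemma \ref{L:P-3} transfers it down to the $\check{\mathcal{D}}_i$. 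The paper instead stays entirely at the level of the ternary quadratic forms $q_{\mathcal{D}_i}$ and $q_{\check{\mathcal{D}}_i}$: it lifts a representation of $\bar{d}$ by $q_{\check{\mathcal{D}}_1}$ to $\mathcal{K}$, uses that $q_{\mathcal{D}_1}$ and $q_{\mathcal{D}_2}$ represent the same elements (the quadratic-form translation of ``same maximal subfields''), and reduces back, with a rescaling by a power of the uniformizer to handle representing vectors of negative valuation --- in which case $q_{\check{\mathcal{D}}_2}$ becomes isotropic, hence universal. The paper's argument is elementary and needs neither completeness nor Azumaya/Brauer theory; yours buys uniformity with the rest of Section 2 at the cost of heavier machinery, and its one slightly terse point --- that the Azumaya order coincides with the valuation ring of $A_2$, so that the residue algebra of $A_2$ really is $\check{\mathcal{D}}_2$ --- is standard and easily justified. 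Both proofs are valid.
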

\begin{proof}
Let $q_{\mathcal{D}_i}$ and $q_{\check{\mathcal{D}}_i}$ be the
corresponding quadratic forms. First, we will show that if a nonzero
$\bar{d} \in \bar{\mathcal{K}}^{(v)}$ is represented by
$q_{\check{\mathcal{D}}_1}$ then it is also represented by
$q_{\check{\mathcal{D}}_2}.$ Indeed, let $\bar{s}_1, \bar{t}_1,
\bar{u}_1 \in \bar{\mathcal{K}}^{(v)}$ be such that
$$q_{\check{\mathcal{D}}_1}(\bar{s}_1, \bar{t}_1, \bar{u}_1) =
\bar{d}.$$ Pick arbitrary lifts $s_1, t_1, u_1 \in
\mathcal{O}_{\mathcal{K} , v}$ and set $d = q_{\mathcal{D}_1}(s_1,
t_1, u_1).$ Since $\mathcal{D}_1$ and $\mathcal{D}_2$ have the same
maximal subfields, there exist $s_2, t_2, u_2 \in \mathcal{K}$ such
that $q_{\mathcal{D}_2}(s_2, t_2, u_2) = d.$ If $$\alpha :=
\min\{v(s_2) , v(t_2) , v(u_2)\} \geqslant 0$$ then taking
reductions we obtain
$$
q_{\check{\mathcal{D}}_2}(\bar{s}_2, \bar{t}_2, \bar{u}_2) =
\bar{d},
$$
as required. On the other hand, if $\alpha < 0$ then for
$$
s'_2 = \pi^{-\alpha} s_2, \ \ t'_2 = \pi^{-\alpha} t_2 \ \
\text{and} \ \ u'_2 = \pi^{-\alpha} u_2,
$$
where $\pi \in \mathcal{K}^{\times}$ is a uniformizer, we will have
$(\bar{s}'_2 , \bar{t}'_2 , \bar{u}'_2) \neq (\bar{0} , \bar{0} ,
\bar{0})$ and
$$
q_{\check{\mathcal{D}}_2}(\bar{s}'_2 , \bar{t}'_2 , \bar{u}'_2) =
\bar{0},
$$
i.e. $q_{\check{\mathcal{D}}_2}$ represents zero. But then, being
nondegenerate, it represents all elements of
$\bar{\mathcal{K}}^{(v)}.$

Now, if $\check{\mathcal{D}}_1$ is not a division algebra, then
$q_{\check{\mathcal{D}}_1}$ represents a nonzero square in
$\bar{\mathcal{D}}^{(v)}.$ By the above remark, the same is true for
$q_{\check{\mathcal{D}}_2},$ hence $\check{\mathcal{D}}_2$ is not a
division algebra, proving our first assertion. On the other hand, if
both $\check{\mathcal{D}}_1$ and $\check{\mathcal{D}}_2$ are
division algebras then by symmetry the above remark implies that
$q_{\check{\mathcal{D}}_1}$ and $q_{\check{\mathcal{D}}_2}$
represent the same elements, and therefore $\check{\mathcal{D}}_1$
and $\check{\mathcal{D}}_2$ have the same maximal subfields.
\end{proof}

\vskip.5mm

Another ingredient we need is a consequence of  the existence of the
specialization map in Milnor's $K$-theory. For a field $F,$ we let
$K_2(F)$ denote its second Milnor $K$-group, and for $a , b \in
F^{\times}$ let $\{a , b \} \in K_2(F)$ denote the corresponding
symbol. According to the Merkurjev-Suslin Theorem (cf. \cite{GS},
Ch. 8), for any field of characteristic $\neq 2$ there is an
isomorphism $K_2(F)/2K_2(F) \simeq \Br(F)_2$ sending $\{a , b\}$ to
the class of the quaternion algebra $\displaystyle \left(\frac{a ,
b}{F} \right).$ Let now $\mathcal{K}$ be a field complete with
respect to a discrete valuation $v.$ Then there exists a
homomorphism $s_v \colon K_2(\mathcal{K}) \to
K_2(\bar{\mathcal{K}}^{(v)})$ such that for any $a , b \in
\mathcal{K}^{\times}$ with $v(a) = v(b) = 0$ we have
$$
s_v(\{a , b\}) = \{\bar{a} , \bar{b}\}
$$
where $\bar{a} , \bar{b}$ are the images of $a , b$ in
$\bar{\mathcal{K}}^{(v)}$ (cf. \cite{GS}, Proposition 7.1.4); notice
that $s_v$ depends on the choice of a uniformizer in $\mathcal{K}.$
Combining $s_v$ with the Merkurjev-Suslin isomorphism, we obtain the
following.
\begin{lemma}\label{L:A-101}
Let $\mathcal{K}$ be a field complete with respect to a discrete
valuation $v.$ Assume that $\mathrm{char}\: \bar{\mathcal{K}}^{(v)}
\neq 2.$ Then there exists a homomorphism, called the {\rm
specialization homomorphism,} $\sigma_v \colon \Br(\mathcal{K})_2
\to \Br(\bar{\mathcal{K}}^{(v)})_2$ such that for any $a , b \in
\mathcal{K}^{\times}$ with $v(a) = v(b) = 0$ we have
$$
\sigma_v\left(\left[\left(\frac{a , b}{\mathcal{K}}
\right)\right]\right) = \left[\left(\frac{\bar{a} ,
\bar{b}}{\bar{\mathcal{K}}^{(v)}}\right)\right].
$$
\end{lemma}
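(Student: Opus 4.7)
The plan is to proceed along the route sketched in the paragraph preceding the lemma: realise $\sigma_v$ as the homomorphism on Brauer groups induced by the Milnor $K$-theoretic specialization $s_v$ under the Merkurjev--Suslin presentation of $\Br(-)_2$. Concretely, fix a uniformizer $\pi$ of $\mathcal{K}.$ By \cite{GS}, Proposition~7.1.4, there is a well-defined homomorphism
$$
s_v \colon K_2(\mathcal{K}) \longrightarrow K_2(\bar{\mathcal{K}}^{(v)})
$$
which sends a symbol $\{a,b\}$ with $v(a)=v(b)=0$ to $\{\bar a,\bar b\}.$ Reducing modulo $2$ produces
$$
\bar{s}_v \colon K_2(\mathcal{K})/2K_2(\mathcal{K}) \longrightarrow K_2(\bar{\mathcal{K}}^{(v)})/2K_2(\bar{\mathcal{K}}^{(v)}).
$$

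Because $\mathrm{char}\: \bar{\mathcal{K}}^{(v)} \neq 2$ (and hence also $\mathrm{char}\: \mathcal{K} \neq 2$, since $\mathcal{K}$ has the same characteristic as its residue field or else characteristic zero), the Merkurjev--Suslin theorem (\cite{GS}, Ch.~8) furnishes canonical isomorphisms
$$
\mu_{\mathcal{K}} \colon K_2(\mathcal{K})/2K_2(\mathcal{K}) \xrightarrow{\sim} \Br(\mathcal{K})_2, \qquad \mu_{k} \colon K_2(\bar{\mathcal{K}}^{(v)})/2K_2(\bar{\mathcal{K}}^{(v)}) \xrightarrow{\sim} \Br(\bar{\mathcal{K}}^{(v)})_2,
$$
each of which sends the class of a Milnor symbol $\{a,b\}$ to the Brauer class of the corresponding quaternion algebra. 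Setting
$$
\sigma_v \;=\; \mu_k \circ \bar{s}_v \circ \mu_{\mathcal{K}}^{-1}
$$
yields the required homomorphism $\Br(\mathcal{K})_2 \to \Br(\bar{\mathcal{K}}^{(v)})_2.$

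The asserted formula is then a direct unwinding of the definitions: for $a,b \in \mathcal{K}^{\times}$ with $v(a) = v(b) = 0,$ the class $\bigl[\bigl(\tfrac{a,b}{\mathcal{K}}\bigr)\bigr]$ is precisely $\mu_{\mathcal{K}}(\{a,b\}\bmod 2),$ so
$$
\sigma_v\!\left(\left[\left(\frac{a,b}{\mathcal{K}}\right)\right]\right) = \mu_k\!\bigl(\bar{s}_v(\{a,b\}\bmod 2)\bigr) = \mu_k\bigl(\{\bar a,\bar b\}\bmod 2\bigr) = \left[\left(\frac{\bar a,\bar b}{\bar{\mathcal{K}}^{(v)}}\right)\right].
$$
There is no genuine obstacle to this argument beyond invoking the two black boxes (the tame specialization in Milnor $K$-theory and Merkurjev--Suslin); the rest is diagram chasing. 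The only minor caveat worth flagging is that $s_v$ a priori depends on the choice of uniformizer $\pi,$ but this dependence is visible only on symbols involving $\pi;$ on symbols of units -- which is exactly where the formula in the lemma lives -- it disappears, so the statement of the lemma is intrinsic as written.
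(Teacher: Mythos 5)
Your proposal is correct and is exactly the argument the paper intends: the lemma is stated as an immediate consequence of combining the tame specialization $s_v$ on $K_2$ (\cite{GS}, Prop.~7.1.4) with the Merkurjev--Suslin isomorphism, which is precisely your $\sigma_v = \mu_k \circ \bar{s}_v \circ \mu_{\mathcal{K}}^{-1}$. Your remark about the uniformizer dependence disappearing on symbols of units matches the paper's own parenthetical note.
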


\vskip1mm

We can extend the notion of the specialization homomorphism
$\sigma_v$ to any field $F$ with a discrete valuation $v$ such that
$\mathrm{char}\: \bar{F}^{(v)} \neq 2$ by defining it to be the
composition
$$
\Br(F)_2 \longrightarrow \Br(F_v)_2 \longrightarrow
\Br(\bar{F}^{(v)})_2
$$
of the extension of scalars with the specialization  homomorphism
described in Lemma \ref{L:A-101}. We then have
\begin{cor}\label{C:2}
Let $D$ be a quaternion algebra over $K.$ Then for any valuation $v$
of $K(x)$ we have $\sigma_v\left([D \otimes_K K(x)]\right) = [D
\otimes_K \overline{K(x)}^{(v)}].$
\end{cor}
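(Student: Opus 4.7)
The plan is to reduce the statement to a direct application of Lemma \ref{L:A-101} by fixing an explicit presentation of $D$ and tracking how the defining parameters behave under base change and reduction.

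First I would fix a presentation $D \simeq \left(\frac{a,b}{K}\right)$ with $a, b \in K^{\times}$, so that $D \otimes_K K(x) \simeq \left(\frac{a,b}{K(x)}\right)$. The valuations that matter here --- in particular all $v \in V$ from Faddeev's sequence (\ref{E:A-2}) --- are trivial on $K$, so the elements $a, b$ lie in $\mathcal{O}_{K(x), v}^{\times}$, i.e.\ $v(a) = v(b) = 0$, both in $K(x)$ and in its completion $K(x)_v$. Moreover, since $v|_K$ is trivial, the reduction map $\mathcal{O}_{K(x),v} \twoheadrightarrow \overline{K(x)}^{(v)}$ restricts on $K$ to the natural embedding $K \hookrightarrow \overline{K(x)}^{(v)}$, so the residues $\bar a, \bar b \in \overline{K(x)}^{(v)}$ coincide with $a, b$ viewed inside the residue field via this embedding.

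Unwinding the definition of $\sigma_v$ for $F = K(x)$ --- namely, extension of scalars along $K(x) \hookrightarrow K(x)_v$ followed by the specialization homomorphism of Lemma \ref{L:A-101} --- I obtain
$$
\sigma_v\bigl([D \otimes_K K(x)]\bigr) = \left[\left(\frac{\bar a, \bar b}{\overline{K(x)}^{(v)}}\right)\right] = \left[\left(\frac{a, b}{\overline{K(x)}^{(v)}}\right)\right] = [D \otimes_K \overline{K(x)}^{(v)}],
$$
which is the required identity.

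The argument is essentially formal: all the substantive content is packaged into Lemma \ref{L:A-101}, and the only thing one must verify is that the composition $K \hookrightarrow \mathcal{O}_{K(x), v} \twoheadrightarrow \overline{K(x)}^{(v)}$ coincides with the natural embedding used to form $D \otimes_K \overline{K(x)}^{(v)}$. This compatibility is automatic from the triviality of $v|_K$, so there is no genuine obstacle to overcome.
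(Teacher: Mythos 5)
Your proposal is correct and is essentially the argument the paper intends: the corollary is stated there without proof as an immediate consequence of Lemma \ref{L:A-101} and the definition of $\sigma_v$ for a non-complete field, and your unwinding (writing $D \simeq \left(\frac{a,b}{K}\right)$, noting $v(a)=v(b)=0$ since $v$ is trivial on $K$, and checking that reduction restricted to $K$ is the natural embedding into $\overline{K(x)}^{(v)}$) is exactly the verification being left to the reader.
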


\vskip2mm

{\it The conclusion of the proof of Theorem A.} It follows from
(AHBN) that (*) holds true for global fields (cf. the proof of
Corollary \ref{C:Fin-1}), so we may assume that $K$ is infinite.
Write the given central quaternion division algebras $D_i$ over
$K(x)$ in the form
$$
D_i = \left(\frac{a_i(x) , b_i(x)}{K(x)}  \right) \ \ \text{where} \
\ a_i(x), b_i(x) \in K[x] \ \text{for} \ i = 1, 2.
$$
Since $K$ is infinite, we can replace $x$ by $x - \alpha$ to ensure
that $a_i(0) , b_i(0) \neq 0$ for $i = 1, 2.$ We then set
$$
\check{D}_i = \left(\frac{a_i(0) , b_i(0)}{K} \right).
$$
By Corollary \ref{C:1}, we have
\begin{equation}\label{E:A-500}
[D_1][D_2] = [D \otimes_K K(x)]
\end{equation}
for some central quaternion algebra $D$ over $K,$ and all we need to
show is that the class $[D] \in \Br(K)$ is trivial. Let $v$ be the
valuation of $K(x)$ associated with $x;$ then $\overline{K(x)}^{(v)}
= K.$ It follows from Lemma \ref{L:A-101} and Corollary \ref{C:2}
that for the corresponding specialization homomorphism $\sigma_v
\colon \Br(K(x)) \to \Br(K)$ we have
$$
\sigma_v([D_i]) = [\check{D}_i] \ \ \text{for} \ i = 1, 2,  \
\text{and} \ \sigma_v([D \otimes_K K(x)]) = [D].
$$
Thus, it follows from (\ref{E:A-500}) that
\begin{equation}\label{E:A-501}
[D] = [\check{D}_1][\check{D}_2].
\end{equation}
However,
\begin{equation}\label{E:A-502}
[\check{D}_1] = [\check{D}_2].
\end{equation}
Indeed, if one of the classes $[\check{D}_i]$ is trivial then, by
Lemma \ref{L:A-201}, so is the other, and (\ref{E:A-502}) is
obvious. On the other hand, if both classes $[\check{D}_1]$ and
$[\check{D}_2]$ are nontrivial, then by Lemma \ref{L:A-201} the
division algebras $\check{D}_1$ and $\check{D}_2$ have the same
maximal subfields, and (\ref{E:A-502}) follows from our assumption
that (*) holds for $K.$ Now, (\ref{E:A-501}) and (\ref{E:A-502})
imply that $[D]$ is trivial, as required. \hfill $\Box$

\vskip3mm

 The specialization technique based on Lemmas \ref{L:A-201} and
\ref{L:A-101} can be used to analyze (*) for the fields of rational
functions on other curves,
which will be done elsewhere. In fact, it can also be used to obtain
some finiteness results pertaining to (*). More precisely, let us
consider the following property of a field $K:$

\vskip2mm

\noindent $(\Phi)$ \ \parbox[t]{15cm}{There exists $n = n(K)$ such
that for any central quaternion division algebra $D$ over $K,$
the~set of isomorphism classes of  central quaternion division
algebras over $K$ having the~same maximal subfields as $D$ contains
$\leqslant n$ elements.}

\vskip2mm

\begin{thm}\label{T:Fin-1}
Let $X$ be an absolutely irreducible smooth projective curve over
a field $K$ of characteristic $\neq 2.$ Assume that the quotient
$\Br_{ur}(\mathcal{K})_2/\iota(\Br(K)_2),$ where
$\Br_{ur}(\mathcal{K})$ is the unramified Brauer group of
$\mathcal{K} = K(X)$ (over $K$) and $\iota \colon \Br(K) \to
\Br(\mathcal{K})$ is the natural map, is finite of order $m,$ and
there exists a family $\cL = \{ L \}$ of  odd degree extensions
$L/K$ such that

\vskip2mm

 $(\alpha)$ if $L \in \cL$ and $K \subset P \subset L$ then $P \in
\cL;$

\vskip2mm

 $(\beta)$ $\displaystyle \bigcup_{L \in \cL} X(L)$ is infinite;

\vskip2mm

 $(\gamma)$ \parbox[t]{15cm}{\baselineskip=3mm each $L \in \cL$ has property $(\Phi)$ and
 $\displaystyle \sup_{L \in \cL} n(L) =: n_0 < \infty$ where $n(L)$
is the number from the definition of  $(\Phi).$}

\vskip2mm

\noindent Then $\mathcal{K}$ has property $(\Phi)$ with
$n(\mathcal{K}) = m \cdot n_0.$ In particular, if $m < \infty,$ $K$
has property $(\Phi)$ and $X$ has infinitely many $K$-rational
points then $\mathcal{K}$ has property $(\Phi)$ with $n(\mathcal{K})
= m \cdot n(K).$
\end{thm}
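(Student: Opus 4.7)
The plan is to mirror the structure of the proof of Theorem~A, now working over the curve $X.$ Fix a central quaternion division algebra $D$ over $\mathcal{K} = K(X),$ and let $\cF = \{D_\alpha\}$ denote the set of isomorphism classes of central quaternion division algebras over $\mathcal{K}$ having the same maximal subfields as $D;$ the goal is to show $\vert \cF \vert \leqslant m \cdot n_0.$ The first step is ramification: for each closed point $x \in X,$ the argument of Lemma~\ref{L:1} applied to the pair $(D_\alpha, D)$ at the associated valuation $v_x$ --- which relies only on Corollary~\ref{C:P-1} and Lemma~\ref{L:P-3}(ii) case~(a), both valid for an arbitrary discrete valuation --- yields the equality $\phi_{v_x}([D_\alpha]) = \phi_{v_x}([D]).$ Hence $[D_\alpha] \cdot [D]^{-1} \in \Br_{ur}(\mathcal{K})_2$ for every $\alpha,$ and composing with the quotient map onto $\Br_{ur}(\mathcal{K})_2 / \iota(\Br(K)_2),$ which has order $m$ by hypothesis, partitions $\cF$ into at most $m$ fibers; within any fixed fiber one has $[D_\alpha] \cdot [D_\beta]^{-1} = \iota(\beta_{\alpha, \beta})$ for some $\beta_{\alpha, \beta} \in \Br(K)_2.$

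Next I would locate a suitable specialization point. Since $D$ (and, by the ramification step, every $D_\alpha$) is ramified only on a finite set of closed points of $X,$ while $\bigcup_{L \in \cL} X(L)$ is infinite by $(\beta),$ some $L$-point for some $L \in \cL$ has image a closed point $x_0 \in X$ outside this ramification locus. By $(\alpha)$ the residue field $k(x_0) \subseteq L$ lies in $\cL,$ and $[k(x_0) : K]$ is odd. The specialization homomorphism $\sigma := \sigma_{v_{x_0}} \colon \Br(\mathcal{K})_2 \to \Br(k(x_0))_2$ of Lemma~\ref{L:A-101} sends each $[D_\alpha]$ to the class of some degree-$2$ central simple algebra $\check{D}_\alpha$ over $k(x_0).$ Applying Lemma~\ref{L:A-201} over the completion $\mathcal{K}_{v_{x_0}}$ to the pair $(D_\alpha, D)$ (after choosing $v_{x_0}$-integral presentations, as permitted by unramifiedness) now gives the dichotomy: either every $\check{D}_\alpha$ is split, or every $\check{D}_\alpha$ is a quaternion division algebra over $k(x_0)$ and all the $\check{D}_\alpha$ share the same maximal subfields.

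The final step is an odd-degree descent within each fiber. Because $[k(x_0) : K]$ is odd, $\Cor \circ \Res$ acts on $\Br(K)_2$ as multiplication by $[k(x_0) : K]$ and is therefore an automorphism, so $\Res_{k(x_0)/K} \colon \Br(K)_2 \to \Br(k(x_0))_2$ is injective. Combined with the identity $\sigma([D_\alpha] \cdot [D_\beta]^{-1}) = \Res_{k(x_0)/K}(\beta_{\alpha, \beta})$ valid within any fixed fiber, this shows that $\check{D}_\alpha \simeq \check{D}_\beta$ (as Brauer classes) forces $\beta_{\alpha, \beta} = 0,$ hence $D_\alpha \simeq D_\beta.$ In the split alternative all $\check{D}_\alpha$ are trivial in $\Br(k(x_0))_2,$ so each fiber collapses to a single isomorphism class and $\vert \cF \vert \leqslant m \leqslant m \cdot n_0.$ In the division alternative, property $(\Phi)$ for $k(x_0) \in \cL$ --- furnished by $(\gamma)$ --- bounds the number of isomorphism classes among the $\{\check{D}_\alpha\}$ by $n(k(x_0)) \leqslant n_0,$ and the injectivity just established shows each fiber contributes at most $n_0$ classes to $\cF,$ so again $\vert \cF \vert \leqslant m \cdot n_0.$

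The main obstacle I anticipate is a clean formulation of the ramification step for a general smooth projective curve: it requires the analogue of Faddeev's sequence (\ref{E:A-2}) over $X$ --- namely the standard residue sequence for $\Br(-)_2$ on a smooth curve --- in order to identify $\Br_{ur}(\mathcal{K})_2$ with the intersection of the kernels of the residue maps $\phi_{v_x}$ over closed points $x \in X,$ so that the hypothesis on $\vert \Br_{ur}(\mathcal{K})_2 / \iota(\Br(K)_2) \vert = m$ becomes applicable. This is standard material but needs careful citation; the remainder of the argument is a direct application of the specialization and dichotomy machinery developed for Theorem~A, combined with the elementary odd-degree transfer $\Cor \circ \Res = [k(x_0) : K]$ on $\Br(-)_2.$
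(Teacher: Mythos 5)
Your proposal is correct and follows essentially the same route as the paper's proof: residues at closed points place the set of candidate classes inside $[D]\cdot\Br_{ur}(\mathcal{K})_2$, which breaks into $m$ cosets of $\iota(\Br(K)_2)$; then a specialization point of odd residue degree supplied by $(\beta)$ and $(\alpha)$, combined with Lemma \ref{L:A-201}, the specialization homomorphism of Lemma \ref{L:A-101}, and injectivity of restriction to an odd-degree extension on $\Br(K)_2$, bounds each coset by $n_0$ via property $(\Phi)$ for the residue field. The only organizational difference is that the paper inserts Lemma \ref{L:Fin-5} to show the connecting classes in $\Br(K)_2$ are represented by quaternion algebras so that Corollary \ref{C:2} applies verbatim, whereas you bypass this by invoking $\sigma_{v}\circ\iota=\Res$ on all of $\Br(K)_2$ (a harmless shortcut, justified by Merkurjev--Suslin and multiplicativity of $\sigma_v$), and you choose the specialization point outside the ramification locus rather than outside the zero/pole locus of a fixed presentation of $D$, which then requires the unit-presentation adjustment you correctly flag.
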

\begin{proof}
Let $V^{\mathcal{K}}$ be the set of discrete valuations of
$\mathcal{K}$ trivial on $K,$ and for $v \in V^{\mathcal{K}}$ let
$\phi_v \colon \Br(\mathcal{K})_2 \to \mathrm{Hom}(G(v) , \mu_2),$
where $G(v)$ is the absolute Galois group of the residue field
$\bar{\mathcal{K}}^{(v)}$ and $\mu_2 = \{ \pm 1 \},$ denote the
composition $\rho_v \circ \lambda_v$ of the natural map $\lambda_v
\colon \Br(\mathcal{K}) \to \Br(\mathcal{K}_v)$ with the reduction
map $\rho_v \colon \Br(\mathcal{K}_v)_2 \to \mathrm{Hom}(G(v) ,
\mu_2).$ Then by definition $\Br_{ur}(\mathcal{K}) = \bigcap_{v \in
V^{\mathcal{K}}} \mathrm{Ker}\: \phi_v.$

Now, fix a central quaternion division algebra $D$ over
$\mathcal{K},$ and let $\mathcal{I}(D)$ denote the collection of
classes $[D'] \in \Br(\mathcal{K})$ for $D'$ a central quaternion
division algebra over $\mathcal{K}$ having the same maximal
subfields as $D.$ It follows from Lemmas \ref{L:P-1} and \ref{L:P-3}
that given $[D'] \in \mathcal{I}(D),$ for any $v \in
V^{\mathcal{K}}$ we have
\begin{equation}\label{E:Fin-0}
\rho_v([D' \otimes_{\mathcal{K}} \mathcal{K}_v]) = \rho_v([D
\otimes_{\mathcal{K}} \mathcal{K}_v]),
\end{equation}
i.e. $\phi_v([D']) = \phi_v([D])$ (cf. the proof of Lemma
\ref{L:1}). Thus, $\mathcal{I}(D) \subset [D] \cdot
\Br_{ur}(\mathcal{K})_2.$ By our assumption,
$\Br_{ur}(\mathcal{K})_2$ is the union of $m$ cosets $\mathcal{C}_1,
\ldots , \mathcal{C}_m$ modulo $\iota(\Br(K)_2).$ So, to prove that
$n(\mathcal{K}) = m \cdot n_0$ satisfies the definition of property
$(\Phi),$ it is enough to show that
\begin{equation}\label{E:Fin-5}
\vert \mathcal{I}(D) \cap \mathcal{C}_i \vert \leqslant n_0 \ \
\text{for all} \ i = 1, \ldots , m.
\end{equation}
If $\mathcal{I}(D) \cap \mathcal{C}_i = \emptyset$ then there is
nothing to prove; otherwise, all central quaternion algebras $D'$
with $[D'] \in \mathcal{I}(D) \cap \mathcal{C}_i$ have the same
maximal subfields as $D.$ So, it is enough to show that for any
central quaternion division algebra $D$ over $\mathcal{K},$ the
number of classes $[D'],$ where $D'$ is a central quaternion
division algebra having the same maximal subfields as $D$ and such
that $[D'] \in [D] \cdot \iota(\Br(K)_2),$ is $\leqslant n_0.$
\begin{lemma}\label{L:Fin-5}
For a central division algebra $\Delta$ over $K$ of degree $\ell =
2^d,$ the algebra $\Delta \otimes_K \mathcal{K}$ is also a division
algebra. Consequently, {\rm (1)} if for such $\Delta$ the algebra
$\Delta \otimes_K \mathcal{K}$ is Brauer-equivalent to a quaternion
algebra then $\Delta$ is itself a quaternion algebra, and {\rm (2)}
the natural map $\Br(K)_2 \to \Br(\mathcal{K})_2$ is injective.
\end{lemma}
\begin{proof}
By $(\beta),$ there is an odd degree extension $L/K$ and a rational
point $p_0 \in X(L).$ Let $v_0 \in V^{\mathcal{K}}$ be the valuation
of $\mathcal{K}$ obtained as the restriction of the valuation of
$L(X)$ associated with $p_0.$ Then the residue field $P =
\bar{\mathcal{K}}^{(v_0)}$ is an odd degree extension of $K.$ Let
$f(x_1, \ldots , x_{\ell^2})$ be the homogeneous polynomial of
degree $\ell$ representing the reduced norm
$\mathrm{Nrd}_{\Delta/K}.$ If $\Delta \otimes_K \mathcal{K}$ is not
a division algebra then $f$ represents zero over $\mathcal{K}.$ Then
$f$ also represents zero over $P,$ i.e. $\Delta \otimes_K P$ is not
a division algebra. This, however, cannot happen as $\ell = 2^d$ and
$[P : K]$ is odd (cf. \cite{Pi}, \S 13.4, part (vi) of the
proposition). A contradiction, proving our first claim. The
remaining assertions easily follow.
\end{proof}

For central quaternion division algebras $D$ and $D'$ over
$\mathcal{K}$ having the same maximal subfields, we have $D \otimes
D' \simeq M_2(D'')$ for some central quaternion algebra $D''$ (cf.
\cite{GS}, Lemma 1.5.2). If in addition $[D'] = [D][\Delta \otimes_K
\mathcal{K}]$ for a central division algebra $\Delta$ over $K$ with
$[\Delta] \in \Br(K)_2$ then it follows from the lemma that either
$\Delta = K$ or $\Delta$ is a quaternion division algebra. So, if we
let $\mathcal{J}(D)$ denote the set of classes $[\Delta] \in
\Br(K)_2$ where $\Delta$ is a central quaternion algebra over $K$
such that the class $[D][\Delta \otimes_K \mathcal{K}] \in
\Br(\mathcal{K})$ is represented by a central quaternion division
algebra $D'$ over $\mathcal{K}$ having the same maximal subfields as
$D,$ then to prove (\ref{E:Fin-5}) it is enough to show that $\vert
\mathcal{J}(D) \vert \leqslant n_0$ for any $D.$ Finally, since for
an odd degree extension $P/K$ the natural map $\lambda_P \colon
\Br(K)_2 \to \Br(P)_2$ is injective, it is enough to find, for a
fixed $D,$ such an extension for which $\vert
\lambda_P(\mathcal{J}(D)) \vert \leqslant n_0,$ and this is what we
are going to do now.

\vskip1mm

Fix a central quaternion division algebra $\displaystyle D = \left(
\frac{a , b}{\mathcal{K}} \right)$ over $\mathcal{K}.$ It follows
from assumption $(\beta)$ in the statement of the theorem that there
exists $L \in \cL$   and a point $p_0 \in X(L)$ which is neither a
zero nor a pole of $a$ or $b.$ As in the proof of Lemma
\ref{L:Fin-5}, we let $v_0$ be the restriction to $\mathcal{K}$ of
the valuation of $L(X)$ associated with $p_0.$ Then
\begin{equation}\label{E:Fin-1}
v_0(a) = v_0(b) = 0,
\end{equation}
and the residue field $P = \bar{\mathcal{K}}^{(v_0)} \subset L$
belongs to $\cL$ (by $(\alpha)$); in particular $[P : K]$ is odd.
Any other central quaternion division algebra $D'$ over
$\mathcal{K}$ having the same maximal subfields as $D$ can be
written in the form $\displaystyle D' = \left( \frac{a ,
b'}{\mathcal{K}} \right)$ for some $b' \in \mathcal{K}^{\times}.$
Furthermore, it follows from (\ref{E:Fin-1}) that $D$ is unramified
at $v_0,$ and then due to (\ref{E:Fin-0}), $D'$ is also unramified
at $v_0.$ Then we can choose $b'$ so that $v_0(b') = 0$ (cf.
\cite{GaSa}, 3.4).

Consider the quaternion algebras $\displaystyle \check{D} = \left(
\frac{\bar{a} , \bar{b}}{P} \right)$ and $\displaystyle \check{D}' =
\left( \frac{\bar{a} , \bar{b}'}{P} \right)$ where $\bar{a},$
$\bar{b}$ and $\bar{b}'$ are the images in $P =
\bar{\mathcal{K}}^{(v_0)}$ of $a,$ $b$ and $b',$ respectively. Set
$\check{\mathcal{I}}(\check{D}) = \{ e \}$ if $\check{D} \simeq
M_2(P),$ and let $\check{\mathcal{I}}(\check{D})$ denote the set of
classes $[\Delta] \in \Br(P)$ where $\Delta$ is a central quaternion
division algebra over $P$ having the same maximal subfields as
$\check{D}$ if the latter is a division algebra. Since $P \in \cL,$
it follows from $(\gamma)$ that $\vert
\check{\mathcal{I}}(\check{D}) \vert \leqslant n_0.$ On the other
hand, according to Lemma \ref{L:A-201} we have  $[\check{D}'] \in
\check{\mathcal{I}}(\check{D}).$ Now, if $[D'] = [D] [\Delta
\otimes_K \mathcal{K}]$ where $[\Delta] \in \mathcal{J}(D)$ then it
follows from Lemma \ref{L:A-101} that
$$
[\check{D}'] = [\check{D}] [\Delta \otimes_K P] \ \ \text{in} \ \
\Br(P).
$$
This shows that $\lambda_P(\mathcal{J}(D)) \subset [\check{D}]^{-1}
\check{\mathcal{I}}(\check{D}),$ and consequently,
$$
\vert \lambda_P(\mathcal{J}(D)) \vert \leqslant \vert
\check{\mathcal{I}}(\check{D}) \vert \leqslant n_0,
$$
as required.
\end{proof}

\vskip1mm

\begin{cor}\label{C:Fin-1}
Let $\mathscr{H}$ be a finitely generated subgroup of the absolute
Galois group $\Ga(\overline{\Q}/\Q),$ and let $K =
\overline{\Q}^{\mathscr{H}}$ be the corresponding fixed field.
Furthermore, let $F(x , y)$ be an absolutely irreducible polynomial
over $K$ such that at least one of the numbers $\deg F,$ $\deg_x F$
or $\deg_y F$ is odd. Then the field $\mathcal{K} = K(X_0)$ of
$K$-rational functions on the affine curve $X_0$ given by $F(x , y)
= 0$ has property~$(\Phi).$
\end{cor}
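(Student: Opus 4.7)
The plan is to apply Theorem \ref{T:Fin-1} to the smooth projective $K$-curve $X$ whose function field is $\mathcal{K} = K(X_0)$; absolute irreducibility of $X$ is inherited from that of $F$. Three things must be verified: the finiteness of the quotient $Q := \Br_{ur}(\mathcal{K})_2/\iota(\Br(K)_2)$, and the existence of a family $\cL$ of odd-degree extensions $L/K$ satisfying $(\alpha)$, $(\beta)$, $(\gamma)$.

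For the finiteness of $Q$, I would use purity on the smooth curve $X$ to identify $\Br_{ur}(\mathcal{K})$ with $\Br(X)$, and then invoke the Hochschild--Serre spectral sequence for $X \to \mathrm{Spec}\: K$. Combining Hilbert 90 with Tsen's theorem ($\Br(X_{\overline{K}}) = 0$), one extracts an exact sequence
\[
\Br(K) \xrightarrow{\iota} \Br(X) \longrightarrow H^1(K, \mathrm{Pic}(X_{\overline{K}})),
\]
so $Q$ embeds into the 2-primary part of $H^1(K, \mathrm{Pic}(X_{\overline{K}}))$. Via the degree sequence $0 \to \mathrm{Jac}(X)(\overline{K}) \to \mathrm{Pic}(X_{\overline{K}}) \to \Z \to 0$, the 2-primary piece is in turn controlled by $H^1(\mathscr{H}, \mathrm{Jac}(X)[2])$, which is finite because $\mathrm{Jac}(X)[2]$ is a finite $\mathscr{H}$-module (of order $2^{2g}$) and $\mathscr{H}$ is a finitely generated profinite group. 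Call the resulting order $m$.

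For the family, let $\cL$ consist of all finite odd-degree extensions of $K$ in $\overline{\Q}$. Condition $(\alpha)$ is immediate. For $(\beta)$, the parity hypothesis produces infinitely many closed points on $X$ of odd residue degree: if $\deg_y F$ is odd, then for all but finitely many $a \in K$ the polynomial $F(a, y) \in K[y]$ retains odd degree, hence has an irreducible factor of odd degree whose roots give a point $(a, b)$ with $[K(b) : K]$ odd; the case $\deg_x F$ odd is symmetric, and if only $\deg F$ is odd one intersects $X \subset \mathbb{P}^2_K$ with a pencil of lines and uses B\'ezout (the odd total intersection multiplicity forces an odd-size Galois orbit). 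For $(\gamma)$, I would argue that each $L \in \cL$ satisfies property (*), which gives the uniform bound $n_0 = 1$. Each such $L$ is a subfield of $\overline{\Q}$ whose absolute Galois group is open in $\mathscr{H}$ and hence still finitely generated; given quaternion division algebras $D_1, D_2$ over $L$ with the same maximal subfields, they descend to a common number field $L_0 \subset L$, and a descent argument combining AHBN over $L_0$ with the specialization/lifting techniques from Section \ref{S:A} should force $D_1 \simeq D_2$ over $L$.

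With the three ingredients in hand, Theorem \ref{T:Fin-1} yields property $(\Phi)$ for $\mathcal{K}$ with $n(\mathcal{K}) = m \cdot n_0 = m$. The main obstacle is clearly $(\gamma)$: establishing property (*) for algebraic extensions of $\Q$ whose absolute Galois group is finitely generated is the delicate step, requiring a descent-plus-AHBN argument compatible with the ``same maximal subfields'' condition, and this is precisely where the hypothesis that $\mathscr{H}$ be finitely generated is essential. The Brauer-group finiteness in the first step, while technical, is more standard and rests on the same finite-generation input via continuous cohomology of finite modules.
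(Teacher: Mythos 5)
Your overall strategy coincides with the paper's: apply Theorem \ref{T:Fin-1} with $\cL$ the family of all odd-degree extensions of $K$. Your verification of $(\beta)$ is essentially the paper's Lemma \ref{L:Fin-1} (the paper handles the ``only $\deg F$ odd'' case by the substitution $y \mapsto y + ax$ rather than B\'ezout, but both work), and your treatment of the finiteness of $\Br_{ur}(\mathcal{K})_2/\iota(\Br(K)_2)$ via Hochschild--Serre, Tsen and the Picard sequence is a spelled-out version of what the paper gets by citing sequence (9.25) of [GMS] together with Serre's type (F) property for $K$.

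The genuine gap is in $(\gamma)$, and you have also misplaced the role of the hypothesis that $\mathscr{H}$ is finitely generated. The paper proves that \emph{every} algebraic extension $L/\Q$ satisfies (*) --- no finite generation of its Galois group is needed; the finite generation of $\mathscr{H}$ enters \emph{only} in the Brauer-group finiteness step (via $H^1(K,C)$ being finite for finite modules $C$). Your sketch for $(\gamma)$ --- ``descend $D_1, D_2$ to a common number field $L_0$ and combine AHBN with specialization'' --- does not go through as stated, because the hypothesis that $D_1$ and $D_2$ have the same maximal subfields over $L$ does \emph{not} descend to the models $D_1^0, D_2^0$ over $L_0$: a maximal subfield of $D_1^0$ might embed into $D_2^0 \otimes_{L_0} P$ only after passing to some larger finite subextension $P \subset L$. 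So one cannot simply invoke AHBN over $L_0$ to conclude $D_1^0 \simeq D_2^0$. The paper's actual argument is: assume $D_1 \not\simeq D_2$, so $D^0_P := (D_1^0 \otimes D_2^0)\otimes_{L^0} P$ is nontrivial in $\Br(P)$ for every finite $L^0 \subset P \subset L$; a compactness (inverse limit of nonempty finite sets) argument then produces a single valuation $\tilde v$ of $L$ at which the local class stays nontrivial at every finite level; at such a place exactly one of $D_1^0, D_2^0$ is locally split, and weak approximation produces $t \in (L^0)^\times$ with $L^0(\sqrt{t})$ a maximal subfield of $D_1^0$ but $t$ a local square at $v_0$, so that $P(\sqrt{t})$ can never embed in $D_2^0\otimes_{L^0}P_{v_P}$ --- contradicting ``same maximal subfields'' over $L$. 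Without this inverse-limit step and the explicit construction of a distinguishing quadratic subfield, your $(\gamma)$ remains unproved, and it is the one genuinely delicate ingredient of the corollary.
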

\begin{proof}
We first note the following elementary statement.
\begin{lemma}\label{L:Fin-1}
Let $F(x , y)$ be an absolutely irreducible polynomial over an
arbitrary field $K$ such that one of the numbers $\deg F,$ $\deg_x
F$ or $\deg_y F$ is odd. Let $X$ be a smooth projective $K$-defined
model for the affine curve $X_0$ given by $F(x , y) = 0.$ Then
$\displaystyle \bigcup_{L \in \cL} X(L),$ where $\cL$ is the family
of all finite extensions $L/K$ of odd degree, is infinite.
\end{lemma}
\begin{proof}
It is enough to show that $\fX_{\cL} = \bigcup_{L \in \cL} X_0(L)$
is infinite. Assume the contrary, i.e. $\fX_{\cL} = \{ (x_1 , y_1),
\ldots , (x_r , y_r) \}.$ We will first consider the case where $d
:=\deg_x F$ is odd. We have $F(x , y) = f_d(y) x^d + f_{d-1}(y)
x^{d-1} + \cdots $ with $f_d \not\equiv 0,$ so one can find an odd
degree extension $K'/K$ containing an element $y_0 \notin \{y_1,
\ldots , y_r\}$ such that $f_d(y_0) \neq 0$ (if $K$ is infinite then
such an element $y_0$ can already be found in $K' = K$). Then the
degree of $\varphi(x) = F(x , y_0) \in K'[x]$ is $d,$ hence odd, and
therefore $\varphi(x)$ has an irreducible factor $\psi(x)$ of odd
degree. Let $x_0$ be a root of $\psi(x)$ (in a fixed algebraic
closure of $K$), and set $L = K'(x_0).$ Then $L$ is of odd degree
over $K',$ hence over $K,$ i.e. $L \in \cL.$ So, $(x_0 , y_0) \in
X_0(L) \subset \fX_{\cL},$ contradicting our construction. The case
where $\deg_y F$ is odd is reduced to the case just considered by
switching $x$ and $y.$ Finally, if $\deg F$ is odd then one can find
an odd degree extension $K''/K$ and $a \in K''$ so that for $\Phi(x
, y) = F(x , y + a x)$ we have $\deg_x \Phi = \deg F,$ hence odd
(again, if $K$ is infinite one can find such an $a$ already in $K''
= K$). Then our claim holds for the $K''$-defined curve given by
$\Phi(x , y) = 0,$ which implies its truth for $X_0$ as any odd
degree extension $L/K''$ is of odd degree over $K.$
\end{proof}

Next, we recall that as follows from (AHBN) (\cite{Pi}, \S 18.5)
{\it any} algebraic extension $L/\Q$  satisfies (*), i.e. it
satisfies $(\Phi)$ with $n(L) = 1.$ Indeed, let $D_1$ and $D_2$ be
central quaternion division algebras  over $L$ having the same
maximal subfields. Assume that $D_1 \not\simeq D_2;$ then $D := D_1
\otimes_L D_2$ represents a nontrivial class in $\Br(L).$ We can
find a {\it finite} extension $L^0/\Q$ contained in $L$ and central
quaternion division algebras $D_i^0$ over $L_0$ such that $D_i =
D_i^0 \otimes_{L_0} L$ for $i = 1, 2.$ Set $D^0 = D_1^0
\otimes_{L^0} D_2^0.$ Then for any finite extension $P/L^0,$
contained in $L,$ the algebra $D^0_P = D^0 \otimes_{L^0} P$
represents a nontrivial class in $\Br(P),$ and therefore by (AHBN)
there exists a valuation $w$ of $P$ (which can be archimedean) such
that the class $[D^0_P \otimes_P P_{w}] \in \Br(P_{w})$ is
nontrivial. The standard argument using the nonemptiness of the
inverse limit of an inverse system of nonempty finite sets shows
that there exists a valuation $\tilde{v}$ of $L$ such that for any
finite subextension $L^0 \subset P \subset L$ and $v_P := \tilde{v}
\vert P,$ the class $[D^0_P \otimes_P P_{v_P}] \in \Br(P_{v_P})$ is
nontrivial. Let $v_0 = \tilde{v} \vert L^0.$ Since
$\Br(L^0_{v_0})_2$ is of order $\leqslant 2$ and the class $[D^0
\otimes_{L^0} L^0_{v_0}]$ is notrivial, one of the classes $[D_i^0
\otimes_{L^0} L^0_{v_0}],$ where $i = 1, 2,$ is trivial and the
other is nontrivial. Suppose that $[D_1^0 \otimes_{L^0} L^0_{v_0}]$
is trivial. Then for any finite subextension $L^0 \subset P \subset
L,$ the class $[D_2^0 \otimes_{L^0} P_{v_P}] \in \Br(P_{v_P})$ is
nontrivial. Let $V$ be the (finite) set of ramification places of
$D_1^0.$ Then $v_0 \notin V,$ so by the weak approximation theorem
there exists $t \in (L^0)^{\times}$ such that $t \notin
{(L^0_v)^{\times}}^{2}$ for all $v \in V$ and $t \in
{(L^0_{v_0})^{\times}}^{2}.$ It follows from (AHBN) that
$L^0(\sqrt{t})$ is isomorphic to a maximal subfield of $D^0_1$ (cf.
\cite{Pi}, Cor. b in \S 18.4), hence $L(\sqrt{t})$ is isomorphic to
a maximal subfield of $D_1.$ Since $D_1$ and $D_2$ have the same
maximal subfields, there exists a finite subextension $L^0 \subset P
\subset L$ such that $P(\sqrt{t})$ is isomorphic to a maximal
subfield of $D_2^0 \otimes_{L^0} P.$ However by our construction $t
\in {P_{v_P}^{\times}}^{2},$ so the latter is impossible as $D_2^0
\otimes_{L^0} P_{v_P}$ is a division algebra.

\vskip1mm

Finally, since $\Ga(\overline{K}/K) = \mathscr{H}$ is finitely
generated, the field $K$ is of type (F) as defined by Serre
(\cite{Se-CG}, Ch. III, \S 4.2), and therefore $H^1(K , C)$ is
finite for any finite $\Ga(\overline{K}/K)$-module $C$ ({\it loc.
cit.}, Theorem 4). Then it follows from exact sequence (9.25) in
\cite{GMS}, p. 27, that for $\mathcal{K} = K(X_0) = K(X),$ where $X$
is the $K$-defined smooth projective model for $X_0,$ the quotient
$\Br_{ur}(\mathcal{K})_2/\iota(\Br(K)_2)$ is finite. Thus, our claim
follows from Theorem \ref{T:Fin-1} applied to the family $\cL = \{ L
\}$ of all odd degree extensions $L/K.$
\end{proof}

\vskip1.5mm

{\bf Remarks 4.10.} 1. Lemma \ref{L:Fin-1} (and hence Corollary
\ref{C:Fin-1}) applies to any elliptic curve as well as to any
hyperelliptic curve given by $y^2 = f(x)$ where $f$ is a polynomial
of odd degree without multiple roots.

\vskip1mm

2. We observe that (*) for a field $K$ is equivalent to $(\Phi)$
with $n(K) = 1.$ For $X = \mathbb{P}^1_K,$ Faddeev's exact sequence
yields $m = 1,$ so we obtain from Theorem \ref{T:Fin-1} (assuming,
as we may,  $K$ to be infinite) that $n(K) = 1$ implies $n(K(x)) =
1,$ which is precisely Theorem A. Thus, Theorem \ref{T:Fin-1}
contains Theorem A as a particular case. For the clarity of
exposition, however, we decided to give first a streamlined proof of
Theorem A which is not loaded with extra technical details.

\vskip2mm

\noindent {\it Acknowledgements.} We are grateful to Louis Rowen and
David Saltman for offering their comments on and corrections to an
early version of this paper.

\vskip5mm

\bibliographystyle{amsplain}

\end{document}